\definecolor{uuuuuu}{rgb}{0.26666666666666666,0.26666666666666666,0.26666666666666666}
\definecolor{xdxdff}{rgb}{0.49019607843137253,0.49019607843137253,1.}
\definecolor{ffqqqq}{rgb}{1.,0.,0.}
\definecolor{uuuuuu}{rgb}{0.26666666666666666,0.26666666666666666,0.26666666666666666}
\definecolor{qqwuqq}{rgb}{0.,0.39215686274509803,0.}
\definecolor{zzttqq}{rgb}{0.6,0.2,0.}
\definecolor{xdxdff}{rgb}{0.49019607843137253,0.49019607843137253,1.}
\definecolor{qqqqff}{rgb}{0.,0.,1.}
\definecolor{cqcqcq}{rgb}{0.7529411764705882,0.7529411764705882,0.7529411764705882}
\theoremstyle{plain}
\newtheorem{theorem}[subsection]{Theorem}
\newtheorem{lemma}[subsection]{Lemma}
\newtheorem{defi}[subsection]{Definition}
\theoremstyle{definition}
\newtheorem{prop}[subsection]{Proposition}
\newtheorem{cor}[subsection]{Corollary}
\newtheorem{remark}[subsection]{Remark}
\newtheorem{note}[subsection]{Note}
\newcommand{\uu}{\cup}
\newcommand{\ci}{\subseteq}
\newcommand{\sci}{\subset}
\newcommand{\es}{\emptyset}
\newcommand{\set}[1]{\{#1\}}
\newcommand{\ga}{\alpha}
\newcommand{\gb}{\beta}
\newcommand{\gd}{\delta}
\renewcommand{\gg}{\gamma}
\newcommand{\gs}{\sigma}
\newcommand{\gt}{\tau}
\newcommand{\tit}{\textit}
\newcommand{\D}[1]{\mathbb{#1}}
\newcommand{\te}{\text}
\begin{document}

To appear, Journal of Fractal Geometry

\title{Quantization and centroidal Voronoi tessellations for probability measures on dyadic Cantor sets}

\author{ Mrinal Kanti Roychowdhury}
\address{School of Mathematical and Statistical Sciences\\
University of Texas Rio Grande Valley\\
1201 West University Drive\\
Edinburg, TX 78539-2999, USA.}
\email{mrinal.roychowdhury@utrgv.edu}

\subjclass[2010]{60Exx, 28A80, 94A34.}
\keywords{Probability measure, Cantor set, quantization, centroidal Voronoi tessellation}
\thanks{The research of the author was supported by U.S. National Security Agency (NSA) Grant H98230-14-1-0320}

\date{}
\maketitle

\pagestyle{myheadings}\markboth{Mrinal Kanti Roychowdhury}{Quantization and centroidal Voronoi tessellations for probability measures on dyadic Cantor sets}

\begin{abstract} Quantization of a probability distribution is the process of estimating a given probability by a discrete probability that assumes only a finite number of levels in its support. Centroidal Voronoi tessellations (CVT) are Voronoi tessellations of a region such that the generating points of the tessellations are also the centroids of the corresponding Voronoi regions.
In this paper, we investigate the optimal quantization and the centroidal Voronoi tessellations with $n$ generators for a Borel probability measure $P$ on $\mathbb R$ supported by a dyadic Cantor set generated by two self-similar mappings with similarity ratios $r$, where $0<r\leq \frac{5-\sqrt{17}}2$.
\end{abstract}

\section{Introduction}

In the context of communication theory, quantization is the process by which data is reduced to a simpler, more coarse representation which is more compatible with digital processing. Loosely speaking, quantization is the heart of analog to digital conversion. It is an area which has increased in importance in the last few decades due to the burgeoning advances in digital technology.  Quantization for probability distributions refers to the idea of estimating a given probability measure by a discrete probability measure with finite support. We refer to \cite{GG, GN, Z} for surveys on the subject and comprehensive lists of references to the literature, see also \cite{AW, DR, GKL, GL}.  For mathematical treatment of quantization one is referred to Graf-Luschgy's book (see \cite{GL1}). Let $\D R^d$ denote the $d$-dimensional Euclidean space, $\|\cdot\|$ denote the Euclidean norm on $\D R^d$ for any $d\geq 1$, and $n\in \D N$. Then, the $n$th \textit{quantization
error} for a Borel probability measure $P$ on $\D R^d$ is defined by
\begin{equation} \label{eq0} V_n:=V_n(P)=\inf \Big\{\int \min_{a\in\alpha} \|x-a\|^2 dP(x) : \alpha \subset \mathbb R^d, \text{ card}(\alpha) \leq n \Big\},\end{equation}
where the infimum is taken over all subsets $\alpha$ of $\mathbb R^d$ with card$(\alpha)\leq n$. If $\int \| x\|^2 dP(x)<\infty$, then there is some set $\alpha$ for
which the infimum is achieved (see \cite{ GKL, GL, GL1}). Such a set $\ga$ for which the infimum occurs and contains no more than $n$ points is called an \tit{optimal set of $n$-means}, or \tit{optimal set of $n$-quantizers}. If $\ga$ is a finite set, in general, the error $\int \min_{a \in \ga} \|x-a\|^2 dP(x)$ is often referred to as the \tit{cost} or \tit{distortion error} for $\ga$, and is denoted by $V(P; \ga)$. Thus, $V_n:=V_n(P)=\inf\set{V(P; \ga) :\alpha \subset \mathbb R^d, \text{ card}(\alpha) \leq n}$. It is known that for a continuous probability measure $P$ an optimal set of $n$-means always has exactly $n$ elements (see \cite{GL1}).
Given a finite subset $\ga\sci \D R^d$, the Voronoi region generated by $a\in \ga$ is defined by
\[M(a|\ga)=\set{x \in \D R^d : \|x-a\|=\min_{b \in \ga}\|x-b\|}\]
i.e., the Voronoi region generated by $a\in \ga$ is the set of all elements in $\D R^d$ which are closer to $a$ than to any other element in $\ga$, and the set $\set{M(a|\ga) : a \in \ga}$ is called the \tit{Voronoi diagram} or \tit{Voronoi tessellation} of $\D R^d$ with respect to $\ga$. A Borel measurable partition $\set{A_a : a \in \ga}$ of $\D R^d$  is called a \tit{Voronoi partition} of $\D R^d$ with respect to $\ga$ (and $P$) if $P$-almost surely
$A_a \sci M(a|\ga)$
for every $a \in \ga$. Notice  that if $\ga=\set{a_1, a_2, \cdots, a_n}$ is an optimal set of $n$-means for $P$ and $\set{A_1, A_2, \cdots, A_n}$ is a Voronoi partition with respect to $\ga$, then
$V_n=\sum_{i=1}^n \int_{A_i} \|x-a_i\|^2 dP(x).$
\tit{Centroidal Voronoi tessellations} (CVTs) are Voronoi tessellations of a region such that the generating points of the tessellations are also the centroids of the corresponding Voronoi regions. A CVT with $n$ generators, also called a CVT with $n$-means, associated with a probability measure $P$ is called an \tit{optimal centroidal Voronoi tessellation} (OCVT) if the $n$ generators form an optimal set of $n$-means for $P$.
Let us now state the following proposition (see \cite{GG, GL1}).
\begin{prop} \label{prop10}
Let $\alpha$ be an optimal set of $n$-means, $a \in \alpha$, and $M(a|\ga)$ be the Voronoi region generated by $a\in \ga$, i.e.,
$M(a|\ga)=\{x \in \mathbb R^d : \|x-a\|=\min_{b \in \alpha} \|x-b\|\}.$
Then, for every $a \in\alpha$,
$(i)$ $P(M(a|\ga))>0$, $(ii)$ $ P(\partial M(a|\ga))=0$, $(iii)$ $a=E(X : X \in M(a|\ga))$, and $(iv)$ $P$-almost surely the set $\set{M(a|\ga) : a \in \ga}$ forms a Voronoi partition of $\D R^d$.
\end{prop}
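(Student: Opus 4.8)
The engine of the whole proposition is the \emph{variance (parallel-axis) decomposition}: for any Borel set $A$ with $P(A)>0$ and any $b\in\D R^d$,
\begin{equation}\label{eqvardecomp}
\int_A\|x-b\|^2\,dP(x)=\int_A\|x-c(A)\|^2\,dP(x)+P(A)\,\|b-c(A)\|^2,
\end{equation}
where $c(A):=E(X\mid X\in A)=\frac{1}{P(A)}\int_A x\,dP(x)$; this follows by expanding the square and using $\int_A(x-c(A))\,dP=0$. Identity \eqref{eqvardecomp} shows that $b\mapsto\int_A\|x-b\|^2\,dP$ is minimized \emph{uniquely} at $b=c(A)$. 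The plan is to establish the statements in the order (iii$'$), (i), (ii), (iii)--(iv), where (iii$'$) is the partition version of (iii): for \emph{any} Voronoi partition $\set{A_a:a\in\ga}$ with respect to $\ga$ one has $a=c(A_a)$ whenever $P(A_a)>0$. To prove (iii$'$), fix such a partition, so that $V_n=\sum_{a\in\ga}\int_{A_a}\|x-a\|^2\,dP$. If $a\ne c(A_a)$ for some $a$ with $P(A_a)>0$, then replacing $a$ by $c(A_a)$ yields a set $\gb$ of at most $n$ points with $V(P;\gb)\le\sum_{a}\int_{A_a}\|x-c(A_a)\|^2\,dP<V_n$ by \eqref{eqvardecomp}, contradicting optimality; hence $a=c(A_a)$.

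For (i), suppose $P(M(a_0\mid\ga))=0$ for some $a_0\in\ga$. Choosing a Voronoi partition $\set{A_a}$ with $A_a\ci M(a\mid\ga)$ forces $P(A_{a_0})=0$, so the $a_0$-term vanishes and $\ga\setminus\set{a_0}$ (at most $n-1$ points) already attains distortion $V_n$; thus $V_{n-1}\le V_n$. Since the support of $P$ is infinite (it is a Cantor set), the quantization error is strictly decreasing, $V_{n-1}>V_n$: picking a support point $x_0$ at positive distance from an optimal $(n-1)$-set and adjoining it strictly lowers the error on a small ball about $x_0$. This contradiction gives $P(M(a\mid\ga))>0$ for every $a$.

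The crux is (ii). I would show that for distinct $a,b\in\ga$ the bisector set $B=M(a\mid\ga)\cap M(b\mid\ga)$ satisfies $P(B)=0$; since $\partial M(a\mid\ga)$ is covered by finitely many such sets, this yields $P(\partial M(a\mid\ga))=0$. Suppose instead $P(B)>0$. Every point of $B$ is equidistant from $a$ and $b$, so both the partition assigning $B$ to $a$ and the one assigning $B$ to $b$ are Voronoi partitions of the \emph{same} optimal set $\ga$. Applying (iii$'$) to both, $a$ must equal the centroid of its region with $B$ included \emph{and} with $B$ excluded; but these two regions differ by the positive-mass set $B$, whose centroid $c(B)$ lies on the bisector hyperplane and is therefore $\ne a$, so the two centroids cannot coincide. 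This contradiction forces $P(B)=0$.

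Finally, (ii) gives $P\big(M(a\mid\ga)\,\triangle\,A_a\big)=0$ for a Voronoi partition $\set{A_a}$, so $M(a\mid\ga)$ and $A_a$ have the same $P$-measure and the same conditional mean; combined with (iii$'$) this yields (iii), i.e.\ $a=E(X\mid X\in M(a\mid\ga))$, and shows that the regions $\set{M(a\mid\ga):a\in\ga}$ themselves partition $\D R^d$ up to a $P$-null set, which is (iv). I expect (ii) to be the main obstacle, since (i) and (iii) reduce cleanly to \eqref{eqvardecomp} together with strict monotonicity of $V_n$, whereas ruling out positive mass on the bisectors requires the subtler two-partition centroid comparison above.
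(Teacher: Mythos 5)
The paper itself gives no proof of this proposition --- it is stated as a known result with a citation to \cite{GG} and \cite{GL1} --- so the only meaningful comparison is with the standard argument in those references, and your proof is essentially that argument, correctly executed: the parallel-axis identity plus strict monotonicity of $V_n$ (valid here because the support of $P$ is infinite) yields (i) and the centroid condition (iii$'$), and the two-partition comparison on a bisector, whose centroid must lie on the bisecting hyperplane while $a$ does not, yields (ii), hence (iii) and (iv). The only point worth tightening is the case $P(A_a)=0$ in your bisector argument, where the ``$B$-excluded'' centroid is undefined: there one concludes directly that $a=c(A_a\cup B)=c(B)$ lies on the hyperplane, giving the same contradiction.
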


\begin{remark} \label{rem1}
Let $\alpha$ be an optimal set of $n$-means and  $a \in \alpha$, then by Proposition~\ref{prop10}, we have
\begin{align*}
a=\frac{1}{P(M(a|\ga))}\int_{M(a|\ga)} x dP=\frac{\int_{M(a|\ga)} x dP}{\int_{M(a|\ga)} dP},
\end{align*}
which implies that $a$ is the centroid of the Voronoi region $M(a|\ga)$ associated with the probability measure $P$ (see also \cite{DFG}). Thus, we can say that for a Borel probability measure $P$ on $\D R^d$, an optimal set of $n$-means forms a centroidal Voronoi tessellation of $\D R^d$; however, the converse is not true in general (see \cite{DFG, GG}).
\end{remark}
Let $S_1, S_2 : \D R \to \D R$ be two contractive similarity mappings such that
$S_1(x)=r x \text{ and } S_2 (x)=r x +(1-r)$
for $0<r<\frac 12$. Then, there exists a unique Borel probability measure $P$
on $\mathbb R$ such that
$P=\frac 12 P\circ S_1^{-1}+\frac 12 P\circ S_2^{-1}$, where $P\circ S_i^{-1}$ denotes the image measure of $P$ with respect to
$S_i$ for $i=1, 2$ (see \cite{H}). Such a $P$ has support the Cantor set generated by the two mappings $S_1$ and $S_2$. In this paper, in Section 3, we have given a centroidal Voronoi tessellation (CVT) for the probability measure $P$ supported by the Cantor set generated by $S_1(x)=\frac 4 9 x$ and $S_2(x)=\frac 4 9 x+\frac 59$. The formula in this paper can be used to obtain a CVT for $P$ on any Cantor set generated by $S_1(x)=rx$ and $S_2(x)=rx+(1-r)$, where $0.4364590141\leq r\leq 0.4512271429$ (written up to ten decimal places). For the classical Cantor set $C$, i.e., when $r=\frac 13$, in the paper \cite{GL2},  Graf and Luschgy determined the optimal sets of $n$-means for the probability measure $P$ for all $n\geq 2$. For a long time it was believed that using the same formula given in \cite{GL2}, one could determine the optimal sets of $n$-means for all $n\geq 2$ for the probability measure $P$ supported by any Cantor set generated by the two mappings $S_1(x)=rx$ and $S_2(x)=rx+(1-r)$ for $0<r\leq \frac {5-\sqrt {17}}{2}$, i.e., if $0<r\leq 0.4384471872$. In Proposition~\ref{prop4} we have shown that it is not always true, by showing that if $0.4371985206<r\leq 0.4384471872$ and $n$ is not of the form $2^{\ell(n)}$ for any positive integer $\ell(n)$, then the distortion error of the CVT obtained using the formula in this paper is smaller than the distortion error of the CVT obtained using the formula given by Graf-Luschgy in \cite{GL2}. In fact, in Section~5, we have further improved this bound which is given in Remark~\ref{remark00}. In addition, the work in this paper shows that under squared error distortion measure, the centroid condition is not sufficient for optimal quantization for a singular continuous probability measure. Recall that the centroid condition is not sufficient for optimal quantization for an absolutely continuous probability measure is already known (see \cite{DFG} and \cite[Chapter~6]{GG}).

\section{Preliminaries}
By a \textit{string} or a \textit{word} $\sigma$ over an alphabet $\{1, 2\}$, we mean a finite sequence $\gs:=\gs_1\gs_2\cdots \gs_k$
of symbols from the alphabet, where $k\geq 1$, and $k$ is called the length of the word $\gs$.  A word of length zero is called the \textit{empty word}, and is denoted by $\emptyset$.  By $\{1, 2\}^*$, we denote the set of all words
over the alphabet $\{1, 2\}$ of some finite length $k$ including the empty word $\emptyset$. By $|\gs|$, we denote the length of a word $\gs \in \{1, 2\}^*$. For any two words $\gs:=\gs_1\gs_2\cdots \gs_k$ and
$\tau:=\tau_1\tau_2\cdots \tau_\ell$ in $\{1, 2\}^*$, by
$\gs\tau:=\gs_1\cdots \gs_k\tau_1\cdots \tau_\ell$, we mean the word obtained from the
concatenation of the two words $\gs$ and $\tau$. Let $S_1$ and $S_2$ be two contractive similarity mappings on $\D R$ given by $S_1(x)=\frac 4 9 x$ and $S_2(x)=\frac 49 x+\frac 59$. For $\gs:=\gs_1\gs_2 \cdots\gs_k \in \{ 1, 2\}^k$, set $S_\gs:=S_{\gs_1}\circ \cdots \circ S_{\gs_k}$
and $J_\gs:=S_{\gs}([0, 1])$. For the empty word $\es$, by $S_\es$, it is meant the identity mapping on $\D R$, and write $J:=J_\es=S_\es([0,1])=[0, 1]$. Then, the set $C:=\bigcap_{k\in \mathbb N} \bigcup_{\gs \in \{1, 2\}^k} J_\gs$ is known as the \textit{Cantor set} generated by the two mappings $S_1$ and $S_2$, and equals the support of the probability measure $P$ given by $P=\frac 12 P\circ S_1^{-1}+\frac 12 P\circ S_2^{-1}$. For any $\gs \in \set{1, 2}^\ast$, the intervals $J_{\gs1}$ and $J_{\gs2}$ into which $J_\gs$ is split up are called the basic intervals of $J_\gs$. For $\gs=\gs_1\gs_2 \cdots\gs_k \in \{ 1, 2\}^\ast$, $k\geq 0$, write
$p_\gs:=\frac 1{2^k}$ and $s_\gs:=(\frac 49)^k$.

Let $X$ be a random variable with probability distribution $P$. By $E(X)$ and $V:=V(X),$ we mean the expectation and the variance of the random variable $X$.
For words $\gb, \gg, \cdots, \gd$ in $\set{1,2}^\ast$, by $a(\gb, \gg, \cdots, \gd)$, we mean the conditional expectation of the random variable $X$ given $J_\gb\uu J_\gg \uu\cdots \uu J_\gd,$ i.e.,
\begin{equation} \label{eq45} a(\gb, \gg, \cdots, \gd)=E(X|X\in J_\gb \uu J_\gg \uu \cdots \uu J_\gd)=\frac{1}{P(J_\gb\uu \cdots \uu J_\gd)}\int_{J_\gb\uu \cdots \uu J_\gd} x dP.
\end{equation}
Let us now give the following lemmas.

\begin{lemma} \label{lemma1}
Let $f : \mathbb R \to \mathbb R^+$ be Borel measurable and $k\in \mathbb N$. Then
\[\int f dP=\sum_{\sigma \in \{1, 2\}^k} \frac 1 {2^k} \int f \circ S_\sigma dP.\]
\end{lemma}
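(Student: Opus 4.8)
The plan is to prove the identity by induction on $k$, using as the engine the self-similarity relation $P=\frac 12 P\circ S_1^{-1}+\frac 12 P\circ S_2^{-1}$ together with the standard change-of-variables (transfer) formula for image measures. Recall that for any Borel measurable $g:\D R\to\D R^+$ and either map $S_i$, one has $\int g\, d(P\circ S_i^{-1})=\int g\circ S_i\, dP$; this is the only nontrivial ingredient, and everything else is bookkeeping over the words in $\set{1,2}^\ast$.

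First I would establish the case $k=1$. Integrating $f$ against the self-similar measure and using linearity of the integral gives $\int f\, dP=\frac 12\int f\, d(P\circ S_1^{-1})+\frac 12\int f\, d(P\circ S_2^{-1})$, and applying the transfer formula to each summand turns this into $\int f\, dP=\frac 12\int f\circ S_1\, dP+\frac 12\int f\circ S_2\, dP$, which is exactly the asserted identity for $k=1$ (the case $k=0$ being trivial, since $S_\es$ is the identity and $\es$ is the only word).

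For the inductive step, suppose the formula holds for some $k\geq 1$ and for every Borel measurable $f:\D R\to \D R^+$. I would apply the $k=1$ identity not to $f$ but to each function $f\circ S_\gs$ appearing on the right-hand side, obtaining $\int f\circ S_\gs\, dP=\frac 12\int f\circ S_\gs\circ S_1\, dP+\frac 12\int f\circ S_\gs\circ S_2\, dP$. The crucial observation is that $S_\gs\circ S_j=S_{\gs j}$, where $\gs j$ denotes the concatenation of the word $\gs$ with the symbol $j\in\set{1,2}$; this is immediate from the definition $S_\gs=S_{\gs_1}\circ\cdots\circ S_{\gs_k}$. Substituting into the inductive hypothesis yields $\int f\, dP=\sum_{\gs\in\set{1,2}^k}\frac 1{2^k}\sum_{j\in\set{1,2}}\frac 12\int f\circ S_{\gs j}\, dP$.

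To finish, I would note that $(\gs,j)\mapsto \gs j$ is a bijection from $\set{1,2}^k\times\set{1,2}$ onto $\set{1,2}^{k+1}$, since every word of length $k+1$ factors uniquely as a length-$k$ prefix followed by a final symbol. Re-indexing the double sum by $\tau:=\gs j$ and combining the constants via $\frac 1{2^k}\cdot\frac 12=\frac 1{2^{k+1}}$ collapses the expression to $\sum_{\tau\in\set{1,2}^{k+1}}\frac 1{2^{k+1}}\int f\circ S_\tau\, dP$, completing the induction. I do not anticipate any genuine obstacle here; the only point requiring care is keeping the concatenation bookkeeping consistent, and the nonnegativity hypothesis $f:\D R\to\D R^+$ tacitly justifies the interchange of finite summation with integration (by Tonelli, allowing $+\infty$ values), so no integrability side conditions are needed.
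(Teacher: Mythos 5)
Your proof is correct and takes essentially the same route as the paper: induction on $k$ driven by the self-similarity relation $P=\frac 12 P\circ S_1^{-1}+\frac 12 P\circ S_2^{-1}$ together with the transfer formula $\int g\, d(P\circ S_i^{-1})=\int g\circ S_i\, dP$. The only (cosmetic) difference is that the paper runs the induction at the level of measures, obtaining $P=\sum_{\sigma\in\{1,2\}^k}\frac 1{2^k}P\circ S_\sigma^{-1}$ and then transferring, whereas you carry out the induction directly on the integral identity with the concatenation bookkeeping made explicit.
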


\begin{proof}
We know $P=\frac 1 2  P\circ S_1^{-1} +\frac 12  P\circ S_2^{-1}$, and so by induction $P=\sum_{\sigma \in \{1, 2\}^k} \frac 1 {2^k} P\circ S_\sigma^{-1}$, and thus the lemma is yielded.
\end{proof}

\begin{lemma} \label{lemma2}
$E(X)=\frac 12  \text{ and } V:=V(X)=\frac {5}{52},$ and for any $x_0 \in \mathbb R$,
$\int (x-x_0)^2 dP(x) =V (X) +(x_0-\frac 12)^2.$
\end{lemma}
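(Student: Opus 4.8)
The plan is to exploit the self-similarity relation $P=\frac12 P\circ S_1^{-1}+\frac12 P\circ S_2^{-1}$, which through Lemma~\ref{lemma1} lets me express the first and second moments of $X$ in terms of themselves; each gives a single linear equation that I solve directly. The final identity is then the standard bias--variance decomposition.

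First I would compute $E(X)$. Applying Lemma~\ref{lemma1} with $f(x)=x$ and $k=1$, together with $S_1(x)=\frac49 x$ and $S_2(x)=\frac49 x+\frac59$, gives
\[
E(X)=\int x\,dP=\frac12\int S_1\,dP+\frac12\int S_2\,dP=\frac49 E(X)+\frac5{18}.
\]
Solving, $\frac59 E(X)=\frac5{18}$, hence $E(X)=\frac12$. (This also follows immediately from the symmetry of $P$ about $\frac12$.)

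Next I would obtain $E(X^2)$ by the same device. Applying Lemma~\ref{lemma1} with $f(x)=x^2$ and $k=1$, and expanding $\big(\frac49 x+\frac59\big)^2=\frac{16}{81}x^2+\frac{40}{81}x+\frac{25}{81}$, yields
\[
E(X^2)=\frac{16}{81}E(X^2)+\frac{20}{81}E(X)+\frac{25}{162}.
\]
Substituting $E(X)=\frac12$ collapses the constant terms to $\frac5{18}$, so $\frac{65}{81}E(X^2)=\frac5{18}$, giving $E(X^2)=\frac{9}{26}$. Then $V(X)=E(X^2)-\big(E(X)\big)^2=\frac{9}{26}-\frac14=\frac5{52}$, as claimed.

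Finally, for the stated integral identity I would write $x-x_0=\big(x-\tfrac12\big)+\big(\tfrac12-x_0\big)$, square, and integrate to obtain
\[
\int (x-x_0)^2\,dP=\int\big(x-\tfrac12\big)^2\,dP+2\big(\tfrac12-x_0\big)\int\big(x-\tfrac12\big)\,dP+\big(x_0-\tfrac12\big)^2.
\]
The middle integral vanishes because $E(X)=\frac12$, and the first integral equals $V(X)$ by definition, which gives $\int(x-x_0)^2\,dP=V(X)+\big(x_0-\tfrac12\big)^2$. I expect no genuine obstacle in this argument; the only point requiring care is correctly setting up the self-referential moment equations from Lemma~\ref{lemma1} (in particular the expansion of the $S_2$ term), since everything else is elementary algebra and the routine decomposition above.
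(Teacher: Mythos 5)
Your proposal is correct and follows essentially the same route as the paper: both derive $E(X)=\frac12$ and $E(X^2)=\frac{9}{26}$ from the self-similarity relation $P=\frac12 P\circ S_1^{-1}+\frac12 P\circ S_2^{-1}$, and both conclude with the standard bias--variance decomposition (which the paper simply cites as standard, while you write out the cross-term argument explicitly). No gaps; the computations match the paper's.
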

\begin{proof}
 We have
$E(X)=\int x dP(x)=\frac 1 2\int \frac 49 x dP(x) +\frac 12  \int (\frac 49 x +\frac 59 ) dP(x)= \frac 4{18} \,E(X) +\frac 4{18} \, E(X) +\frac 5{18} =\frac  4 {9}\, E(X)+ \frac 5 {18}$,
which implies $E(X)=\frac 1 2.$
\begin{align*}
&E(X^2)=\int x^2 dP(x)=\frac 1 2 \int x^2 dP\circ S_1^{-1}(x)+\frac 12 \int x^2 dP\circ S_2^{-1}(x) \\
&=\frac 1 2  \int \frac {16} {81}\, x^2 dP(x) +\frac 12   \int \Big(\frac 4 9 \, x +\frac 5 9 \Big)^2 dP(x)= \frac {16}{81}\, E(X^2)  +\frac {20}{81} \, E(X) +\frac {25} {162}\\
&=\frac {16}{81}\,E(X^2) +\frac {20}{162} +\frac {25} {162},
\end{align*}
which implies $E(X^2)=\frac {9}{26}$, and hence
$V(X)=E(X-E(X))^2=E(X^2)-\left(E(X)\right)^2 =\frac {9}{26}-(\frac 12 )^2=\frac {5}{52}$. Then, following the standard theory of probability, we have
$ \int(x-x_0)^2 dP(x) =V(X)+(x_0-E(X))^2,$ and thus the lemma is yielded.
\end{proof}

\begin{cor} \label{cor1}
Let $\gs \in \set{1, 2}^\ast$. Then, for any $x_0 \in \mathbb R$,
\begin{equation} \label{eq234} \int_{J_\gs} (x-x_0)^2 dP(x) =p_\gs\Big(s_\gs^2 V  +(S_\gs(\frac 12)-x_0)^2\Big).\end{equation}
\end{cor}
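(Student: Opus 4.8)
The plan is to localize the integral to the single word $\gs$ and then reduce to Lemma~\ref{lemma2} via the affine change of variables induced by $S_\gs$. Write $k=|\gs|$. First I would recall the $k$-fold self-similarity relation $P=\sum_{\gt\in\set{1,2}^k}\frac 1{2^k}\,P\circ S_\gt^{-1}$ established inside the proof of Lemma~\ref{lemma1}. Applying it to the nonnegative integrand $x\mapsto \mathbf 1_{J_\gs}(x)(x-x_0)^2$ gives
\[\int_{J_\gs}(x-x_0)^2\,dP(x)=\sum_{\gt\in\set{1,2}^k}\frac 1{2^k}\int \mathbf 1_{J_\gs}(S_\gt(y))\,\big(S_\gt(y)-x_0\big)^2\,dP(y).\]
The crucial point is that only the summand $\gt=\gs$ survives. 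Since $\frac 49<\frac 12$, the basic intervals $J_\gt$ with $|\gt|=k$ are pairwise disjoint, and each $S_\gt$ carries the support $C\ci[0,1]$ of $P$ into $J_\gt$; hence for $\gt\neq\gs$ one has $S_\gt(y)\nin J_\gs$ for $P$-a.e.\ $y$, so $\mathbf 1_{J_\gs}(S_\gt(y))=0$ $P$-almost everywhere, while for $\gt=\gs$ it equals $1$ on all of $[0,1]$. With $p_\gs=\frac 1{2^k}$ this reduces the identity to
\[\int_{J_\gs}(x-x_0)^2\,dP(x)=p_\gs\int \big(S_\gs(y)-x_0\big)^2\,dP(y).\]

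Next I would compute the surviving integral. Each $S_i$ is a similarity of ratio $\frac 49$, so $S_\gs$ is affine, say $S_\gs(y)=s_\gs\,y+c_\gs$ with $s_\gs=(\frac 49)^k$; thus $\big(S_\gs(y)-x_0\big)^2=s_\gs^2\big(y-x_0'\big)^2$ where $x_0':=\frac{x_0-c_\gs}{s_\gs}$. The second formula of Lemma~\ref{lemma2} then yields $\int (y-x_0')^2\,dP(y)=V+(x_0'-\tfrac 12)^2$. Using $S_\gs(\tfrac 12)=s_\gs\cdot\tfrac 12+c_\gs$ one checks that $x_0'-\tfrac 12=\frac{x_0-S_\gs(\frac 12)}{s_\gs}$, hence $s_\gs^2(x_0'-\tfrac 12)^2=(S_\gs(\tfrac 12)-x_0)^2$. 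Combining the two displays gives
\[p_\gs\int \big(S_\gs(y)-x_0\big)^2\,dP(y)=p_\gs s_\gs^2\big(V+(x_0'-\tfrac 12)^2\big)=p_\gs\big(s_\gs^2V+(S_\gs(\tfrac 12)-x_0)^2\big),\]
which is exactly the claimed formula.

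The only genuine obstacle is the localization step---ensuring that the indicator $\mathbf 1_{J_\gs}\circ S_\gt$ isolates exactly one term of the self-similar decomposition. This rests on the disjointness of the same-length basic intervals, guaranteed by $\frac 49<\frac 12$, together with the fact that $P$ is carried by the Cantor set $C\ci[0,1]$; here any worry about shared endpoints is immaterial, since already the level-one intervals $[0,\frac 49]$ and $[\frac 59,1]$ are disjoint and the same persists at every level. Everything after the localization is a routine affine rescaling handled by Lemma~\ref{lemma2}.
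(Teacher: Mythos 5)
Your proof is correct and takes essentially the route the paper intends (the corollary is stated there without proof): localize the integral over $J_\gs$ via the self-similarity relation of Lemma~\ref{lemma1}, using the pairwise disjointness of the level-$k$ basic intervals so that only the $\gt=\gs$ term survives, then rescale affinely and apply Lemma~\ref{lemma2}. This is the same localization identity the paper uses implicitly in Note~\ref{note1}, so your argument simply fills in the details the author omitted.
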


\begin{note}  \label{note1}
Notice  that from the above lemma it follows that the optimal set of one-mean is the expected value and the corresponding quantization error is the variance $V$ of the random variable $X$.
For $\sigma \in \{1, 2\}^k$, $k\geq 1$, since $a(\gs)=E(X : X \in J_\sigma)$,  using Lemma~\ref{lemma1}, we have
\begin{align*}
&a(\gs)=\frac{1}{P(J_\sigma)} \int_{J_\sigma} x \,dP(x)=\int_{J_\sigma} x\, dP\circ S_\sigma^{-1}(x)=\int S_\sigma(x)\, dP(x)=E(S_\sigma(X)).
\end{align*}
Since $S_1$ and $S_2$ are similarity mappings, it is easy to see that $E(S_j(X))=S_j(E(X))$ for $j=1, 2$ and so by induction, $a(\gs)=E(S_\sigma(X))=S_\sigma(E(X))=S_\sigma(\frac 12)$
 for $\sigma\in \{1, 2\}^k$, $k\geq 1$.
\end{note}
In the next section, Proposition~\ref{prop1}, Proposition~\ref{prop2} and Proposition~\ref{prop3} determine the centroidal Voronoi tessellations with $n$ generators for the probability measure $P$ for all $n\geq 2$.

\section{Centroidal Voronoi tessellations for all $n\geq 2$}

In this section, we determine the CVTs with $n$-means for each $n\geq 2$ of the Cantor set $C$ generated by the two mappings $S_1$ and $S_2$ defined by $S_1(x)=\frac 49 x$ and $S_2(x)=\frac 49 x+\frac 59$ for $x\in \D R$. As the probability distribution $P$ has support the Cantor set $C$ and $C\sci J$, a CVT of $J$ with respect to the probability distribution is also a CVT of $C$ and vice versa. Once we know a CVT, using the formula \eqref{eq234}, the corresponding distortion error can easily be obtained.  Write
$
 A_\gs:=\set{a(\gs11, \gs121, \gs1221), a(\gs1222, \gs21), a(\gs22)}, \te{ or } \\
 A_\gs:=\set{a(\gs11),  a(\gs12, \gs2111), a(\gs2112, \gs212, \gs22)}$. If $\gs$ is the empty word $\es$, then we have $A:=A_\es=\set{a(11, 121, 1221), a(1222, 21), a(22)}$, or \\$A:=A_\es=\set{a(11),  a(12, 2111), a(2112, 212, 22)}$.

Let us now prove the following lemma.
\begin{lemma} \label{lemma41}
Similarity mappings preserve the ratio of the distances of a point from any other two points.
\end{lemma}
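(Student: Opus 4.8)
The plan is to reduce the statement to the single defining property of a similarity mapping, namely that it scales all pairwise distances by one fixed positive factor. Recall that a map $S$ is a similarity mapping with ratio $r>0$ precisely when $\|S(u)-S(v)\|=r\,\|u-v\|$ for all points $u,v$; for the maps $S_1,S_2$ considered in this paper this holds with $r=\frac 49$, since $|S_j(u)-S_j(v)|=\frac 49\,|u-v|$ for $j=1,2$ by the explicit form of $S_1$ and $S_2$.

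Given a reference point $p$ and two further points $x,y$ with $y\neq p$, I would apply this scaling identity to the two pairs $(p,x)$ and $(p,y)$ separately, obtaining $\|S(p)-S(x)\|=r\,\|p-x\|$ and $\|S(p)-S(y)\|=r\,\|p-y\|$. Forming the quotient then yields
\[
\frac{\|S(p)-S(x)\|}{\|S(p)-S(y)\|}=\frac{r\,\|p-x\|}{r\,\|p-y\|}=\frac{\|p-x\|}{\|p-y\|},
\]
because the common factor $r$ is strictly positive and cancels. This is exactly the assertion that the ratio of the distances of $p$ from the two points $x$ and $y$ is left unchanged by $S$.

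There is no genuine obstacle here: the only points requiring any care are that the ratio be well defined, which holds as soon as the reference point $p$ differs from the point $y$ appearing in the denominator, and that the scaling factor $r$ be strictly positive so that it may legitimately be cancelled. Both conditions are part of the hypotheses, so the argument is immediate from the distance-scaling property alone and requires no further computation.
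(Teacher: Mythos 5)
Your proof is correct and follows essentially the same route as the paper's: both arguments invoke the defining scaling property of a similarity mapping (the paper writes it as $|S_\gs(b)-S_\gs(a)|=s_\gs|b-a|$ for the composed maps $S_\gs$, you write it as $\|S(p)-S(x)\|=r\,\|p-x\|$) and then cancel the common positive factor in the quotient. The only cosmetic difference is that the paper states the computation directly for the iterated maps $S_\gs$ with ratio $s_\gs=(\frac 49)^{|\gs|}$, while you phrase it for a general similarity with ratio $r>0$, which subsumes that case.
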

\begin{proof}
Let $a, b, c\in \D R$. For any $\gs \in \set{1, 2}^k$ it is enough to prove that $\frac{|b-a|}{|c-b|}=\frac{|S_\gs(b)-S_\gs(a)|}{|S_\gs(c)-S_\gs(b)|}$, which is clearly true, since
\[\frac{|S_\gs(b)-S_\gs(a)|}{|S_\gs(c)-S_\gs(b)|}=\frac{s_\gs|b-a|}{s_\gs|c-b|}=\frac{|b-a|}{|c-b|}.\]
\end{proof}
\begin{remark} \label{rem2} The two mappings $S_1$ and $S_2$ defined in this paper are increasing mappings, i.e., for any $x, y \in \D R$,  $x<y$ implies $S_i(x)<S_i(y)$ for $i=1, 2$, and so by Lemma~\ref{lemma41} if $a<b<c$, we have
\[(c-b)(S_\gs(b)-S_\gs(a))=(b-a)(S_\gs(c)-S_\gs(b)).\]
\end{remark}

\begin{prop}\label{prop1}
Let $n\in \D N$ and $n=2^k$ for some $k\in \D N$. Then,
$\ga_n=\set{S_\gs(\frac 1 2) : \gs \in \set{1, 2}^k}$
forms a unique optimal CVT with $n$-means with distortion error $V_n=\left(\frac 4  9\right)^{2k} V$.
\end{prop}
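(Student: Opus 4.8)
The plan is to establish three things in turn: that $\ga_n$ is a centroidal Voronoi tessellation, that its distortion equals $\left(\frac 49\right)^{2k}V$, and that it is the unique optimal set; the first two are direct and the third is the substantive part. I would begin by identifying the Voronoi partition of $\ga_n$. By Note~\ref{note1} each generator $S_\gs(\frac 12)=a(\gs)$ is the midpoint, hence the centroid, of the basic interval $J_\gs$, and these generators inherit the left-to-right order of the intervals $\set{J_\gs:\gs\in\set{1,2}^k}$. I would show that the natural assignment $J_\gs\mapsto a(\gs)$ is in fact the Voronoi partition by checking that the midpoint $\frac12(a(\gs)+a(\gs'))$ of any two order-adjacent generators falls in the gap separating $J_\gs$ and $J_{\gs'}$. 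Using Lemma~\ref{lemma41} and Remark~\ref{rem2}, self-similarity reduces this to the single worst case of two sibling intervals $J_{\gt1},J_{\gt2}$, where the midpoint is exactly the parent centroid $S_\gt(\frac12)$ and lies in the central gap $\big(S_\gt(\frac49),S_\gt(\frac59)\big)$. Hence $J_\gs\ci M(a(\gs)\mid\ga_n)$ for every $\gs$, the partition is Voronoi, and since each generator is the centroid of its region, $\ga_n$ is a CVT. The distortion then follows from Corollary~\ref{cor1} with $x_0=S_\gs(\frac12)$:
\[V(P;\ga_n)=\sum_{\gs\in\set{1,2}^k}\int_{J_\gs}\big(x-S_\gs(\tfrac12)\big)^2\,dP=\sum_{\gs\in\set{1,2}^k}p_\gs s_\gs^2 V=2^k\cdot\frac1{2^k}\Big(\frac49\Big)^{2k}V=\Big(\frac49\Big)^{2k}V.\]

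For optimality I would argue by induction on $k$, with the anchor $k=0$ (the one-mean, Lemma~\ref{lemma2}). The engine is the self-similar decomposition coming from Lemma~\ref{lemma1} together with the change of variables $S_i^{-1}$: for any finite set $\gb$,
\[\int_{J_i}\min_{b\in\gb}(x-b)^2\,dP=\frac12\Big(\frac49\Big)^2 V\big(P;S_i^{-1}\gb_i\big),\]
where $\gb_i$ denotes the generators whose Voronoi regions meet $J_i$. Provided no region straddles the central gap, the supports in $J_1$ and $J_2$ are served by disjoint subfamilies with $|\gb_1|+|\gb_2|=|\gb|$, and summing the two pieces yields the recursion $V_{2^k}=\frac12(\frac49)^2\min_{n_1+n_2=2^k}(V_{n_1}+V_{n_2})$. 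Feeding in the inductive value $V_{2^{k-1}}=(\frac49)^{2(k-1)}V$ through the balanced split recovers $V_{2^k}=(\frac49)^{2k}V$, matching $V(P;\ga_n)$.

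The main obstacle is justifying the separation that drives this recursion: one must show that an optimal set places no generator in the open central gap and that every Voronoi region meets at most one of $J_1,J_2$ within the support, so that $|\gb_1|+|\gb_2|$ does not exceed $|\gb|$. This is precisely the geometric step that consumes the size of $r=\frac49$ relative to the gap $1-2r$; I would argue it via Proposition~\ref{prop10} (a generator with empty or straddling region would violate the positivity and centroid conditions) together with Lemma~\ref{lemma41}, and I expect it to require a careful case analysis. A second, subtler point is that the balanced split $n_1=n_2=2^{k-1}$ genuinely minimizes $V_{n_1}+V_{n_2}$; I would obtain this from the reflection symmetry of $P$ about $\frac12$ (which exchanges $J_1$ and $J_2$ and carries optimal sets to optimal sets) combined with monotonicity and a convexity comparison of the distortion sequence, the same symmetry then forcing the optimal set to be symmetric and, inductively, to coincide with $\ga_n$, which gives uniqueness.
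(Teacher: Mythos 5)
Your verification that $\ga_n$ is a CVT and your computation of the distortion error are correct and essentially the paper's own argument: the paper obtains the midpoint property from Remark~\ref{rem2}, the centroid property from Remark~\ref{rem1} and Note~\ref{note1}, and the value $\left(\frac49\right)^{2k}V$ from Corollary~\ref{cor1}, exactly as you do; your reduction of the Voronoi check to sibling intervals via Lemma~\ref{lemma41} merely makes explicit what the paper leaves implicit. The divergence is in the optimality and uniqueness claim. There the paper does not run any recursion at all: it concludes directly from Corollary~\ref{cor1} that, since each $S_\gs(\frac12)$ is the unique minimizer of $a\mapsto\int_{J_\gs}(x-a)^2\,dP$, the set $\ga_n$ is the unique optimal CVT. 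You correctly sensed that global optimality over \emph{all} $n$-point configurations is the substantive issue and set up the Graf--Luschgy-type self-similar recursion to address it, but what you have written is a program, not a proof, and its missing steps are precisely the hard part.

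Concretely, three gaps. First, the separation property --- that an optimal set has no generator whose Voronoi cell meets both $J_1\cap C$ and $J_2\cap C$, so that $|\gb_1|+|\gb_2|\leq |\gb|$ --- does not follow from Proposition~\ref{prop10}, which only asserts that generators are centroids of cells of positive probability and is perfectly compatible with a straddling cell; establishing separation is the core of the argument in \cite{GL2} for $r=\frac13$ (several pages there), and ``a careful case analysis'' is a placeholder for it, not a substitute. Second, even granting separation, your induction cannot close as stated: the recursion $V_{2^k}=\frac12\left(\frac49\right)^2\min_{n_1+n_2=2^k}(V_{n_1}+V_{n_2})$ takes a minimum over \emph{all} splits, so the inductive knowledge of $V_{2^{k-1}}$ alone cannot identify the minimizing split --- you need lower bounds on $V_m$ for every $m<2^k$, including the non-dyadic $m$ for which the optimal sets are exactly what this paper leaves open (Remark~\ref{remark00}); the convexity of the sequence $(V_n)$ that you invoke to bridge this is nowhere established and cannot be assumed, since it encodes precisely the unknown information. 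Third, the uniqueness argument is a non sequitur: invariance of $P$ under $x\mapsto 1-x$ implies only that the reflection of an optimal set is again optimal, not that every optimal set is symmetric --- the paper's own Lemma~\ref{lemma43} exhibits distinct mirror-image configurations, illustrating why this inference fails. So while your outline is the ``right'' ambitious route in principle, it is incomplete at every one of its load-bearing points, whereas the paper's proof of this proposition is a short direct argument that never needs (and never attempts) that machinery.
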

\begin{proof} By Remark~\ref{rem2}, for any $\gs \in \set{1, 2}^k$ we have
\[(1-\frac 12)(S_\gs(\frac 12)-S_\gs(0))=(\frac 12-0)(S_\gs(1)-S_\gs(\frac 12)),\] which implies
$S_\gs(\frac 12)=\frac 12(S_\gs(0)+S_\gs(1))$, i.e.,  $S_\gs(\frac 12)$ are the midpoints of the basic intervals $J_\gs$ for all $\gs \in \set{1, 2}^k$. In addition, by Remark~\ref{rem1} and Note~\ref{note1}, $S_\gs(\frac 12)$ is the centroid of $J_\gs$. Thus, the set $\set{S_\gs(\frac 12) : \gs \in \set{1, 2}^k}$ forms a CVT of the Cantor set. Moreover, by Corollary~\ref{cor1}, for $a\in \D R$, $\int_{J_\gs}(x-a)^2 dP$ is minimum when $a=S_\gs(\frac 12)$. Hence, the set $\ga_n$ forms a unique optimal CVT of the Cantor set, and then
\[V_n=\int \min_{a \in \ga_n}\|x-a\|^2 dP=\sum_{\gs \in \set{1, 2}^k} \int_{J_\gs}\min_{a \in \ga}(x-a)^2  dP=\sum_{\gs \in \set{1, 2}^k} p_\gs s_\gs^2 V=\left(\frac 49\right)^{2k} V.\]
This completes the proof of the proposition.
\end{proof}
\begin{lemma}\label{lemma43} Let
$A =\set{a(11, 121, 1221), a(1222, 21), a(22)}$  or \\
$A =\set{a(11),  a(12, 2111), a(2112, 212, 22)}$.
Then, $A$ forms a CVT with three-means of the Cantor set $C$.

\begin{proof}
We have
\begin{align*} S_{1221}(1)=0.395671&<\frac{1}{2}(a(11, 121, 1221)+ a(1222, 21))=0.400854<S_{1222}(0)=0.405426,\\
S_{21}(1)=0.753086&<\frac{1}{2}(a(1222, 21)+a(22))=0.754839<S_{22}(0)=0.802469.
\end{align*}
Thus, $A=\set{a(11, 121, 1221), a(1222, 21), a(22)}$ forms a CVT of $C$.  Due to symmetry $A=\set{a(11),  a(12, 2111), a(2112, 212, 22)}$ also forms a CVT of $C$.
\end{proof}

\end{lemma}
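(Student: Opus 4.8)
The plan is to read off the definition of a CVT directly: by Remark~\ref{rem1} and the definition of a centroidal Voronoi tessellation, a finite set of points forms a CVT precisely when each point is the centroid (the $P$-conditional expectation) of its own Voronoi region. Now the three points of $A$ are, by construction, the conditional expectations of $P$ over the three blocks
\[U_1:=J_{11}\uu J_{121}\uu J_{1221},\qquad U_2:=J_{1222}\uu J_{21},\qquad U_3:=J_{22},\]
namely $a_1:=a(11,121,1221)$, $a_2:=a(1222,21)$ and $a_3:=a(22)$. Since $\set{U_1,U_2,U_3}$ is a partition of the Cantor set $C$ (indeed $J_1=U_1\uu J_{1222}$ and $J_2=J_{21}\uu J_{22}$), it suffices to show that $\set{U_1,U_2,U_3}$ is, up to a $P$-null set, exactly the Voronoi partition generated by $A$. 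Once this is done, each $a_i$ is by definition the centroid of its Voronoi region $U_i$, and $A$ is a CVT.

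First I would record the ordering $a_1<a_2<a_3$ and the geometric layout of the blocks: reading left to right, $U_1$ occupies the left portion of $J_1$ and terminates at $S_{1221}(1)$, the piece $J_{1222}$ of $U_2$ begins at $S_{1222}(0)$, while its second piece $J_{21}$ lies beyond the central gap $(\frac49,\frac59)$ and ends at $S_{21}(1)$, and $U_3=J_{22}$ begins at $S_{22}(0)$. For points on the line the Voronoi boundaries are simply the midpoints $m_1:=\frac12(a_1+a_2)$ and $m_2:=\frac12(a_2+a_3)$, so the Voronoi regions are $C\cap(-\infty,m_1]$, $C\cap[m_1,m_2]$ and $C\cap[m_2,\infty)$. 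Hence the whole lemma reduces to the two chains of inequalities
\[S_{1221}(1)<m_1<S_{1222}(0)\qquad\text{and}\qquad S_{21}(1)<m_2<S_{22}(0),\]
for then $m_1$ and $m_2$ fall in the two Cantor-free gaps separating consecutive blocks, and the Voronoi partition matches $\set{U_1,U_2,U_3}$ $P$-almost surely.

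To verify these inequalities I would compute the three centroids explicitly. By Note~\ref{note1} one has $a(\gs)=S_\gs(\frac12)$, and for a union of basic intervals the conditional expectation is the $p_\gs$-weighted average
\[a(\gb,\dots,\gd)=\frac{\sum p_\gs\,S_\gs(\tfrac12)}{\sum p_\gs},\qquad p_\gs=2^{-|\gs|},\]
the sum running over the constituent words; substituting the weights $p_{11}=\frac14$, $p_{121}=\frac18$, $p_{1221}=p_{1222}=\frac1{16}$, $p_{21}=\frac14$, $p_{22}=\frac14$ together with the corresponding values of $S_\gs(\frac12)$ yields the numerical values of $m_1$ and $m_2$, which are then compared with the endpoints $S_{1221}(1)$, $S_{1222}(0)$, $S_{21}(1)$, $S_{22}(0)$. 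Finally, the second displayed form of $A$ needs no separate argument: the reflection $R(x)=1-x$ conjugates $S_1$ and $S_2$ and preserves $P$, hence carries any CVT to a CVT while flipping every digit of each word, and applying $R$ to the first $A$ produces the second.

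Since the conceptual content collapses to checking that the two Voronoi midpoints land in the two gaps of $C$, the only real work is the arithmetic of the weighted averages; the single place deserving care is the middle block $U_2$, whose two pieces straddle the large central gap, so confirming that $U_2$ is the Voronoi region of $a_2$ genuinely requires both midpoint inequalities at once together with the fact that $(\frac49,\frac59)$ carries no $P$-mass.
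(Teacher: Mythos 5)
Your proposal is correct and takes essentially the same route as the paper: both arguments reduce the claim to the two midpoint-in-gap inequalities $S_{1221}(1)<\tfrac{1}{2}\bigl(a(11,121,1221)+a(1222,21)\bigr)<S_{1222}(0)$ and $S_{21}(1)<\tfrac{1}{2}\bigl(a(1222,21)+a(22)\bigr)<S_{22}(0)$, verify them numerically, and handle the second set by symmetry. The only difference is one of exposition: you spell out why these inequalities suffice (the Voronoi boundaries fall in $P$-null gaps, so each generator is the centroid of its own Voronoi region) and make the reflection $x\mapsto 1-x$ explicit, steps the paper leaves implicit.
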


\begin{lemma}\label{lemma44}  The set $\set{a(111, 1121, 11221), a(11222, 121), a(122), a(21), a(22)}$ forms a CVT with five-means.
\end{lemma}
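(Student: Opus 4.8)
The plan is to follow the two-step scheme of Lemma~\ref{lemma43}. The centroid condition is automatic: by the definition \eqref{eq45}, each of the five proposed points is the conditional expectation of $X$ given the union of the basic intervals in its argument, hence by Remark~\ref{rem1} the centroid of that union with respect to $P$. Moreover the five blocks
\[\set{J_{111}, J_{1121}, J_{11221}},\ \set{J_{11222}, J_{121}},\ \set{J_{122}},\ \set{J_{21}},\ \set{J_{22}}\]
partition $C$: within $J_{11}$ one has $C\ii J_{11}=J_{111}\uu J_{1121}\uu J_{11221}\uu J_{11222}$, a disjoint union of basic intervals, within $J_{12}$ one has $C\ii J_{12}=J_{121}\uu J_{122}$, and $C\ii J_2=J_{21}\uu J_{22}$. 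Hence it remains only to show that the Voronoi partition generated by the five points assigns to each point exactly its own block; since each point is then the centroid of its region, $A$ will be a CVT with five means.

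Since $S_1,S_2$ are increasing and $J_1$ lies to the left of $J_2$, the five points are linearly ordered, say $a_1<a_2<a_3<a_4<a_5$, in the same left-to-right order as their blocks, and each block is a contiguous run of basic intervals in this order. As in Lemma~\ref{lemma43}, it therefore suffices to check that each separating midpoint $\frac12(a_i+a_{i+1})$ lies in the Cantor gap between the rightmost interval of block $i$ and the leftmost interval of block $i+1$; the monotone ordering of the $a_i$ then forces every point of block $i$ to be strictly closer to $a_i$ than to any other generator. Concretely, I would verify
\begin{align*}
S_{11221}(1)&<\frac12\big(a(111,1121,11221)+a(11222,121)\big)<S_{11222}(0),\\
S_{121}(1)&<\frac12\big(a(11222,121)+a(122)\big)<S_{122}(0),\\
S_{122}(1)&<\frac12\big(a(122)+a(21)\big)<S_{21}(0),\\
S_{21}(1)&<\frac12\big(a(21)+a(22)\big)<S_{22}(0).
\end{align*}

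To evaluate the generators I would use Note~\ref{note1} (so that $a(\gs)=S_\gs(\frac12)$) together with the weighted-average identity $a(\gb,\dots,\gd)=\big(\sum_\gs p_\gs S_\gs(\frac12)\big)\big/\big(\sum_\gs p_\gs\big)$, with $p_\gs=2^{-|\gs|}$, which follows from \eqref{eq45} and Note~\ref{note1}. For example, $a(111,1121,11221)$ is the average of $S_{111}(\frac12),S_{1121}(\frac12),S_{11221}(\frac12)$ with weights $\frac18,\frac1{16},\frac1{32}$, while $a(11222,121)$ is the average of $S_{11222}(\frac12)$ and $S_{121}(\frac12)$ with weights $\frac1{32},\frac18$. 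All endpoints $S_\gs(0),S_\gs(1)$ come directly from $S_1(x)=\frac49x$ and $S_2(x)=\frac49x+\frac59$.

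The only real labor is this bookkeeping: the intervals in a single block sit at different depths, so the weights $2^{-|\gs|}$ vary within a block and must be renormalized before averaging. The main risk is an arithmetic slip in a weighted centroid or an endpoint, but conceptually there is no obstacle, and I expect each inequality to hold with a comfortable margin, just as the two inequalities do in Lemma~\ref{lemma43}. Once the four midpoint conditions are confirmed, the five points generate a Voronoi tessellation whose regions meet $C$ in the five blocks, and each point is the centroid of its region, so $A$ is a CVT with five means.
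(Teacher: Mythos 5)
Your proposal is correct, but it takes a more computational route than the paper. The paper's proof never checks four separation inequalities: it observes that the first three generators are exactly the $S_1$-image of the three-means CVT $\set{a(11,121,1221), a(1222,21), a(22)}$ of Lemma~\ref{lemma43}, and the last two are the $S_2$-image of the two-means CVT of Proposition~\ref{prop1}; by Lemma~\ref{lemma41} (similarities preserve ratios of distances) these are automatically CVTs of $J_1$ and $J_2$ respectively, so the only thing left to verify numerically is the single gluing inequality $S_{122}(1)\leq \frac 12(a(122)+a(21))\leq S_{21}(0)$, i.e.\ $0.444444<0.527435<0.555556$. Your four conditions are all true, but three of them are redundant given earlier results: since $S_1$ is affine and increasing, $\frac12\bigl(S_1(u)+S_1(v)\bigr)=S_1\bigl(\frac12(u+v)\bigr)$, so your first two inequalities are precisely the $S_1$-images of the two inequalities already verified in Lemma~\ref{lemma43}, and your fourth is the $S_2$-image of the (trivial) two-means condition of Proposition~\ref{prop1}; only your third coincides with the paper's new check. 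So your approach buys self-containedness (no appeal to Lemma~\ref{lemma41}, everything reduced to explicit weighted centroids), while the paper's buys economy and, more importantly, a template that scales: the same ``inherit the block CVTs by similarity, then check one gap'' argument is what gets reused in Propositions~\ref{prop2} and~\ref{prop3}. One caveat: your write-up stops at ``I would verify'' without executing the arithmetic; the numbers do work out (e.g.\ $\frac12(a(122)+a(21))=0.527435$ and $\frac12(a(21)+a(22))=0.777778\in(0.753086,\,0.802469)$), and your weighted-centroid formula with weights $p_\gs=2^{-|\gs|}$ renormalized within each block is the right way to compute them, but a finished proof must actually display these values, as the paper does.
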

\begin{proof} We have $S_1^{-1}(\set{a(111, 1121, 11221), a(11222, 121), a(122)})\\=\set{a(11, 121, 1221), a(1222, 21), a(22)}$. So, by Lemma~\ref{lemma43}, the set \\
$S_1^{-1}(\set{a(111, 1121, 11221), a(11222, 121), a(122)})$ forms a CVT with three-means. Similarly, by Proposition~\ref{prop1}, the set $S_2^{-1}(\set{a(21), a(22)})=\set{a(1), a(2)}$ forms a CVT with two-means. By Lemma~\ref{lemma41}, we know that the similarity mappings preserve the ratio of the distances of a point from any other two points, and so the set $\set{a(111, 1121, 11221), a(11222, 121), a(122)}$ forms a CVT with three-means of $J_1$ and the set $\set{a(21), a(22)}$ forms a CVT with two-means of $J_2$. Thus, the union of the CVTs of $J_1$ and $J_2$ will form a CVT of the Cantor set $C$ if we can prove that
\[S_{122}(1)\leq \frac 12(a(122)+a(21))\leq S_{21}(0),\]
which is clearly true since
\[S_{122}(1)=0.444444< \frac 12(a(122)+a(21))=0.527435< S_{21}(0)=0.555556.\]
Hence the given set forms a CVT with five-means.
\end{proof}

\begin{remark}
Similarly, we can prove that the sets \\$\set{a(111),  a(112, 12111), a(12112, 1212, 122), a(21), a(22)}$,\\ $\set{a(11), a(12), a(211, 2121, 21221), a(21222, 221), a(222)}$, \\ $\set{a(11), a(12), a(211),  a(212, 22111), a(22112, 2212, 222)}$ also form CVTs with five-means, i.e., the number of CVTs with five-means is four.
\end{remark}

\begin{lemma}  \label{lemma45}
The set $\set{a(111, 1121, 11221), a(11222, 121), a(122), a(211, 2121, 21221), \\ a(21222, 221), a(222)}$ forms a CVT with six-means.
\end{lemma}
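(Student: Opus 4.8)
The plan is to reduce the six-means assertion to two copies of the three-means CVT furnished by Lemma~\ref{lemma43}, one placed inside $J_1$ and one inside $J_2$, and then to show that these two halves glue into a genuine CVT of the whole Cantor set $C$. The structure mirrors the proof of Lemma~\ref{lemma44}, except that now both pieces are three-means CVTs rather than a three-means piece and a two-means piece.

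First I would record that prepending the symbol $1$ to each word amounts to applying $S_1$, so that
\[S_1\big(\set{a(11, 121, 1221), a(1222, 21), a(22)}\big)=\set{a(111, 1121, 11221), a(11222, 121), a(122)},\]
and likewise prepending $2$ gives
\[S_2\big(\set{a(11, 121, 1221), a(1222, 21), a(22)}\big)=\set{a(211, 2121, 21221), a(21222, 221), a(222)}.\]
By Lemma~\ref{lemma43} the set $\set{a(11, 121, 1221), a(1222, 21), a(22)}$ is a CVT with three-means of $C$; since by Lemma~\ref{lemma41} similarity mappings preserve ratios of distances, and $S_1,S_2$ are increasing (Remark~\ref{rem2}), the first displayed set is a CVT with three-means of $J_1$ and the second is a CVT with three-means of $J_2$.

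Next I would glue the two pieces. Every generator of the $J_1$-block lies to the left of the gap $(S_{122}(1),S_{211}(0))=(\tfrac49,\tfrac59)$ and every generator of the $J_2$-block lies to its right, and $P$ charges no mass inside this gap; consequently the union is a CVT of $C$ as soon as the Voronoi boundary between the innermost pair falls inside the gap, i.e. as soon as
\[S_{122}(1)\le \tfrac12\big(a(122)+a(211, 2121, 21221)\big)\le S_{211}(0).\]
I expect this single separating inequality to be the only real obstacle: the two internal CVTs are handed to us by Lemma~\ref{lemma43} together with Lemma~\ref{lemma41}, so all the remaining work lies in computing the two centroids $a(122)$ and $a(211,2121,21221)$ and confirming that their midpoint lands strictly between $\tfrac49$ and $\tfrac59$. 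Once this is verified, the Voronoi region of each $J_1$-generator meets $C$ only within $J_1$ and that of each $J_2$-generator only within $J_2$, so each generator remains the centroid of its Voronoi region with respect to all six points, and the displayed set is the claimed CVT with six-means.
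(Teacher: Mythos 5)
Your proposal is correct and follows essentially the same route as the paper's own proof: both decompose the six-point set into the images under $S_1$ and $S_2$ of the three-means CVT of Lemma~\ref{lemma43}, invoke Lemma~\ref{lemma41} to conclude these are CVTs of $J_1$ and $J_2$, and reduce everything to the single separating inequality $S_{122}(1)\leq \frac 12\left(a(122)+a(211, 2121, 21221)\right)\leq S_{211}(0)$. The only step you leave as a promissory note is the numerical check of that inequality, which the paper carries out explicitly ($0.444444 < 0.521 < 0.555556$), so your plan does go through as stated.
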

\begin{proof} The set $\set{a(11, 121, 1221), a(1222, 21), a(22)}$ forms a CVT of $J$ with three-means. So, by Lemma~\ref{lemma41}, the sets
$S_1(\set{a(11, 121, 1221), a(1222, 21), a(22)})$  and \\ $S_2(\set{a(11, 121, 1221), a(1222, 21), a(22)})$ form CVTs of $J_1$ and $J_2$, respectively. Thus, the given set will form a CVT with six-means if we can prove that
\[S_{122}(1)\leq \frac 12(a(122)+ a(211, 2121, 21221))\leq S_{211}(0),\]
which is clearly true since
\[S_{122}(1)=0.444444< \frac 12(a(122)+ a(211, 2121, 21221))=0.521< S_{211}(0)=0.555556.\]
Thus, the lemma is yielded.
\end{proof}
\begin{remark} By Lemma~\ref{lemma43}, since there are two different CVTs of $J$ with three-means, one can say that each of the basic intervals $J_1$ and $J_2$ has two different CVTs, and thus using all possible combinations one can see that the total number of CVTs with six-means is four.
\end{remark}

\begin{lemma}  \label{lemma46}
The set $\set{a(111, 1121, 11221), a(11222, 121), a(122), a(211), a(212), a(221), a(222)}$ forms a CVT with seven-means.
\end{lemma}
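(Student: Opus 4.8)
The plan is to exploit the self-similarity of $P$ exactly as in the proofs of Lemma~\ref{lemma44} and Lemma~\ref{lemma45}, by splitting the seven generators into those lying in $J_1$ and those lying in $J_2$. First I would note that the three points $\set{a(111, 1121, 11221), a(11222, 121), a(122)}$ all lie in $J_1$, and that stripping the leading symbol gives
\[S_1^{-1}(\set{a(111, 1121, 11221), a(11222, 121), a(122)})=\set{a(11, 121, 1221), a(1222, 21), a(22)},\]
which is precisely the three-means CVT of the whole Cantor set furnished by Lemma~\ref{lemma43}. Since by Lemma~\ref{lemma41} similarity mappings preserve the ratio of the distances of a point from any other two points, this set forms a CVT with three-means of $J_1$.

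Next I would treat the remaining four points $\set{a(211), a(212), a(221), a(222)}$, which all lie in $J_2$. Stripping the leading symbol gives
\[S_2^{-1}(\set{a(211), a(212), a(221), a(222)})=\set{a(11), a(12), a(21), a(22)},\]
and by Proposition~\ref{prop1} with $k=2$ this is the unique optimal CVT with four-means of the whole Cantor set. Invoking Lemma~\ref{lemma41} once more, the four points form a CVT with four-means of $J_2$.

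It then remains only to glue the two sub-tessellations together. The internal Voronoi boundaries inside $J_1$ and inside $J_2$ are inherited from the two sub-CVTs, so the union will form a CVT of the whole Cantor set provided the boundary separating the rightmost generator in $J_1$, namely $a(122)$, from the leftmost generator in $J_2$, namely $a(211)$, falls inside the central gap. Concretely I would verify
\[S_{122}(1)\leq \frac 12\big(a(122)+a(211)\big)\leq S_{211}(0),\]
which, using $a(122)=S_{122}(\frac 12)$ and $a(211)=S_{211}(\frac 12)$ from Note~\ref{note1}, is the only new content in the argument. The main (and essentially only) obstacle is confirming that this midpoint lands in the gap; invoking the symmetry $x\mapsto 1-x$ of the construction one has $a(211)=1-a(122)$, so the midpoint equals $\frac 12$, which plainly lies strictly between $S_{122}(1)=\frac 49$ and $S_{211}(0)=\frac 59$. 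This would establish the claim.
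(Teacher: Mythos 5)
Your proof is correct and follows essentially the same route as the paper: split the seven generators into a three-means CVT of $J_1$ (via Lemma~\ref{lemma43} and Lemma~\ref{lemma41}) and a four-means CVT of $J_2$ (via Proposition~\ref{prop1}), then verify the gluing condition $S_{122}(1)\leq \frac 12(a(122)+a(211))\leq S_{211}(0)$. The only difference is cosmetic: the paper checks the midpoint numerically (it equals $0.5$), whereas you derive the exact value $\frac 12$ from the symmetry $x\mapsto 1-x$, which is a valid (and slightly cleaner) way to do the same verification.
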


\begin{proof}
The set $\set{a(11, 121, 1221), a(1222, 21), a(22)}$ forms a CVT of $J$ with three-means. So, by Lemma~\ref{lemma41}, the set $\set{a(111, 1121, 11221), a(11222, 121), a(122)}$  forms a CVT of $J_1$ with three-means. Again by Proposition~\ref{prop1}, the set $\set{a(211), a(212), a(221), a(222)}$ forms a CVT of $J_2$ with four-means. Hence, the union of the two CVTs will form a CVT with seven-means if we can prove that
\[S_{122}(1)\leq \frac 12(a(122)+ a(211))\leq S_{211}(0),\]
which is clearly true since
\[S_{122}(1)=0.444444< \frac 12(a(122)+ a(211))=0.5< S_{211}(0)=0.555556.\]
Thus, the lemma is obtained.
\end{proof}
\begin{remark} Each $J_i$ for $i=1, 2$, has two different CVTs, and so using all possible combinations we see that the total number of CVTs with seven-means is four.
\end{remark}

Let us now prove the following two propositions.
\begin{prop} \label{prop2}
Let $n \in \D N$ be such that $n=2^{\ell(n)}+k$, where $1\leq k\leq  2^{\ell(n)-1}$. Let $I \ci \set{1, 2}^{\ell(n)-1}$ with card$(I)=k=n-2^{\ell(n)}$.  Then, the set
$\ga_n:=\ga_n(I)$, where
\[\ga_n(I)=\left\{\begin{array}{ll}
\uu_{\gs \in I} A_\gs \uu\Big\{S_{\gs1}(\frac 1 2 ), S_{\gs2}(\frac 1 2) : \gs \in \set{1, 2}^{\ell(n)-1}\setminus I\Big\} \te{ if } 1\leq k< 2^{\ell(n)-1}, & \\
\uu_{\gs \in I} A_\gs  \te{ if } k= 2^{\ell(n)-1}\neq 1, &\\
A_\es \te{ if } k= 2^{\ell(n)-1}=1, \te{ i.e., when } n=3, &
\end{array}\right.\]
forms a CVT with $n$-means.
The number of CVTs for $1\leq k< 2^{\ell(n)-1}$ is  $2^{n-2^{\ell(n)}} \times {}^{2^{\ell(n)-1}}C_{n-2^{\ell(n)}}$, and the number of CVTs for $k= 2^{\ell(n)-1}$ is $2^{2^{\ell(n)-1}}$.
\end{prop}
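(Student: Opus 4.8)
The plan is to realize $\ga_n(I)$ as a union of local tessellations, one inside each level-$(\ell(n)-1)$ basic interval. Inside $J_\gs$ with $\gs\in\set{1,2}^{\ell(n)-1}$ we place the three-means block $A_\gs$ when $\gs\in I$ and the two-means block $\set{S_{\gs1}(\frac12),S_{\gs2}(\frac12)}$ when $\gs\notin I$. First I would record the cardinality: the $k=\operatorname{card}(I)$ intervals indexed by $I$ contribute three generators each and the remaining $2^{\ell(n)-1}-k$ intervals contribute two each, so $\operatorname{card}(\ga_n(I))=3k+2(2^{\ell(n)-1}-k)=2^{\ell(n)}+k=n$, as required.

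Next I would verify that each local block is genuinely a CVT of its interval. Arguing as in Note~\ref{note1}, the conditional self-similarity of $P$ gives $a(\gs\gb,\gs\gg,\dots)=S_\gs(a(\gb,\gg,\dots))$, whence $A_\gs=S_\gs(A_\es)$ and $\set{S_{\gs1}(\frac12),S_{\gs2}(\frac12)}=S_\gs(\set{S_1(\frac12),S_2(\frac12)})$. By Lemma~\ref{lemma43} and Proposition~\ref{prop1} the sets $A_\es$ and $\set{S_1(\frac12),S_2(\frac12)}$ are CVTs of $C$ with three and two generators; Lemma~\ref{lemma41} then transports them to CVTs of $J_\gs$. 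Since the gaps of every $J_\gs$ carry no $P$-mass, each local generator is still the $P$-centroid of its local Voronoi region, so it remains only to glue the blocks.

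The heart of the proof is the gluing step: the disjoint union of local CVTs is a global CVT precisely when, for each pair of consecutive level-$(\ell(n)-1)$ intervals, the bisector of the two innermost generators lies in the separating gap. Two consecutive words have the form $\gs=\rho 1 2^{j}$ and $\gt=\rho 2 1^{j}$ for a common ancestor $\rho$ and some $0\le j\le \ell(n)-2$, and the gap between $J_\gs$ and $J_\gt$ is $S_\rho([S_1(1),S_2(0)])$. Factoring out $S_\rho$ by Lemma~\ref{lemma41}, the required inequality $S_\gs(1)\le\frac12(p+q)\le S_\gt(0)$, where $p$ is the rightmost generator in $J_\gs$ and $q$ the leftmost in $J_\gt$, is equivalent to
\[
\frac{4}{9}=S_{12^{j}}(1)\le\frac12(p'+q')\le S_{21^{j}}(0)=\frac{5}{9},
\]
with $p',q'$ the rightmost and leftmost generators of the CVTs placed in $J_{12^{j}}$ and $J_{21^{j}}$. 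The key is then to establish two bounds that are uniform in $j$ and in the admissible local configuration: $p'\ge S_{12}(\frac12)=\frac{28}{81}$, and, by the reflection symmetry $x\mapsto 1-x$ of $P$, $q'\le\frac{53}{81}$. Granting these, together with $p'\le S_{12^{j}}(1)=\frac49$ and $q'\ge S_{21^{j}}(0)=\frac59$, one gets $p'+q'\ge\frac{28}{81}+\frac{5}{9}=\frac{73}{81}>\frac{72}{81}=\frac89$ and $p'+q'\le\frac49+\frac{53}{81}=\frac{89}{81}<\frac{90}{81}=\frac{10}{9}$, i.e.\ $\frac12(p'+q')\in(\frac49,\frac59)$, as needed. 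The base instances $j=0$ of these inequalities are of the type verified numerically in Lemmas~\ref{lemma44} and~\ref{lemma45}.

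Finally, for the enumeration I would note that a tessellation of the above shape is pinned down by two independent choices: the subset $I\ci\set{1,2}^{\ell(n)-1}$ of intervals receiving three generators, and, for each $\gs\in I$, which of the two forms of $A_\gs$ is used (the two-means blocks being rigid). When $1\le k<2^{\ell(n)-1}$ this yields ${}^{2^{\ell(n)-1}}C_{n-2^{\ell(n)}}$ choices for $I$ times $2^{\,n-2^{\ell(n)}}$ choices of forms; when $k=2^{\ell(n)-1}$ the set $I$ must be all of $\set{1,2}^{\ell(n)-1}$, leaving only the $2^{2^{\ell(n)-1}}$ form-choices. I expect the main obstacle to be the uniformity in the third paragraph: proving $p'\ge\frac{28}{81}$ and $q'\le\frac{53}{81}$ simultaneously for every depth $j$ and every combination of two- and three-means blocks (and both forms of the latter), rather than merely for the finitely many small configurations handled in the preceding lemmas. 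I would obtain this by writing each innermost generator as $S_{12^{j}}$ (resp.\ $S_{21^{j}}$) applied to one of the finitely many root generators and invoking the monotonicity of $S_{12^{m}}(\frac12)$ in $m$.
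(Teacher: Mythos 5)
Your proof is correct, and its skeleton is exactly the paper's: realize $\ga_n(I)$ as local two- and three-point blocks transported into the level-$(\ell(n)-1)$ intervals via Lemma~\ref{lemma41}, Lemma~\ref{lemma43} and Proposition~\ref{prop1}, glue across the gaps, and count the choices of $I$ together with the two admissible forms of $A_\gs$ (the cardinality check and the enumeration you give coincide with the paper's). Where you genuinely go beyond the paper is the gluing step. The paper disposes of it by analogy --- its proof says ``proceeding in the similar way as Lemma~\ref{lemma44}'' and ``as Lemma~\ref{lemma45}'', and those lemmas only verify, numerically, adjacencies across the level-one gap between the sibling intervals $J_1$ and $J_2$; no statement uniform over adjacencies $J_{\rho 1 2^{j}}$, $J_{\rho 2 1^{j}}$ at arbitrary depth $j$ and over all mixtures of block types is ever formulated there. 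Your third paragraph supplies precisely this missing uniformity: consecutive words are $\rho 1 2^{j}$ and $\rho 2 1^{j}$, the gap is $S_\rho((\frac49,\frac59))$, and after factoring out the affine map $S_\rho$ the bisector condition reduces to $\frac12(p'+q')\in(\frac49,\frac59)$, which your bounds $p'\ge\frac{28}{81}$, $q'\le\frac{53}{81}$, $p'\le\frac49$, $q'\ge\frac59$ deliver. Those bounds do hold uniformly: every admissible block in $J_{12^{j}}$ has rightmost generator $S_{12^{j}}(c)$ with $c\in\{S_2(\frac12),\,S_{22}(\frac12),\,a(2112,212,22)\}$, each at least $\frac79$, and since $S_{2^{j}}(c)\ge c$ on $[0,1]$ one gets $p'\ge S_1(\frac79)=\frac{28}{81}$, with the bound on $q'$ following by the reflection symmetry of $P$ and of the two forms of $A_\gs$ (one cosmetic slip: the monotonicity you should invoke at the end is that of $S_{2^{j}}(c)$ in $j$ for fixed $c$, not of $S_{12^{m}}(\frac12)$). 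So what your route buys is a single argument that closes, for all $n$ and all adjacency depths at once, the step the paper settles only for its $n=5,6,7$ template cases and then asserts in general; the paper's route buys brevity at the price of leaving exactly the uniformity you identified as the main obstacle unproved.
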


\begin{proof} Let us first assume that $n=2^{\ell(n)}+k$, where $1\leq k< 2^{\ell(n)-1}$. Let $I \ci \set{1, 2}^{\ell(n)-1}$ with card$(I)=n-2^{\ell(n)}$. By Lemma~\ref{lemma43}, for each $\gs \in I$, the set $S_\gs^{-1}(A_\gs)$ forms a CVT of $J$ with three-means, and so by Lemma~\ref{lemma41} the set $A_\gs$ forms a CVT of $J_\gs$ with three-means. Now, proceeding in the similar way as Lemma~\ref{lemma45}, we can say that the set $\uu_{\gs \in I} A_\gs$ forms a CVT of $\uu_{\gs \in I} J_\gs$. Again, for each $\gs \in \set{1, 2}^{\ell(n)-1}\setminus I$ the set $S_\gs^{-1}(\set{S_{\gs1}(\frac 1 2 ), S_{\gs2}(\frac 1 2)})$ forms a CVT of $J$ with two-means, and so by Lemma~\ref{lemma41} the set $\set{S_{\gs1}(\frac 1 2 ), S_{\gs2}(\frac 1 2)}$ forms a CVT of $J_\gs$ with two-means. Now, proceeding in the similar way as Lemma~\ref{lemma44}, we can say that the set $\uu_{\gs \in I} A_\gs \uu\Big\{S_{\gs1}(\frac 1 2 ), S_{\gs2}(\frac 1 2) : \gs \in \set{1, 2}^{\ell(n)-1}\setminus I\Big\}$ forms a CVT with $n$-means. Notice  that $\te{card}(I)=n-2^{\ell(n)}$ and $I$ can be chosen in $ {}^{2^{\ell(n)-1}}C_{n-2^{\ell(n)}}$ ways. For each $\gs \in I$ there are two different choices for $A_\gs$. Hence, the number of CVTs in this case is $2^{n-2^{\ell(n)}} \times {}^{2^{\ell(n)-1}}C_{n-2^{\ell(n)}}$.
Similarly, by Lemma~\ref{lemma41}, Lemma~\ref{lemma43} and proceeding in the similar way as Lemma~\ref{lemma45}, we can prove that if $n=2^{\ell(n)}+k$, where $k= 2^{\ell(n)-1}\neq 1$ or $k= 2^{\ell(n)-1}= 1$, the set $\uu_{\gs \in I} A_\gs$ forms a CVT of $C$, and the number of CVTs in either case is given by $2^{2^{\ell(n)-1}}$. Hence, the proposition is yielded.
\end{proof}

\begin{prop} \label{prop3}
Let $n \in \D N$ be such that $n=3\cdot 2^{\ell(n)-1}+k$, where $1\leq k\leq 2^{\ell(n)-1}-1 $. Let $I \ci \set{1, 2}^{\ell(n)-1}$ with card$(I)=n-3 \cdot 2^{\ell(n)-1}$.  Then, the set
$\ga_n:=\ga_n(I)$, where
\[
\ga_n(I)=(\uu_{\gs \in \set{1, 2}^{\ell(n)-1}\setminus I} A_\gs)\uu \Big\{S_{\gs \gt}\Big(\frac 1 2\Big ):   \gs \in I \te{ and } \gt \in \set{ 1, 2}^2\Big\},
\]
forms a CVT with $n$-means.
The number of such sets is  $2^{2^{\ell(n)+1}-n}\times {}^{2^{\ell(n)-1}}C_{n-3 \cdot 2^{\ell(n)-1}}$.
\end{prop}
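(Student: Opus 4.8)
The plan is to follow the template of Lemmas~\ref{lemma44}, \ref{lemma45} and \ref{lemma46} and of Proposition~\ref{prop2}: exhibit $\ga_n(I)$ as a disjoint union of local centroidal Voronoi tessellations, one on each basic interval $J_\gs$ with $\gs\in\set{1,2}^{\ell(n)-1}$, and then verify that these local tessellations glue into a global CVT of $C$. For $\gs\in I$ the local piece is $\set{S_{\gs\gt}(\tfrac 12):\gt\in\set{1,2}^2}$; applying $S_\gs^{-1}$ turns it into $\set{S_\gt(\tfrac 12):\gt\in\set{1,2}^2}$, which by Proposition~\ref{prop1} (with $k=2$) is the optimal CVT with four means of $J$, so by Lemma~\ref{lemma41} the piece is a CVT with four means of $J_\gs$. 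For $\gs\notin I$ the local piece is $A_\gs$, a CVT with three means of $J_\gs$ by Lemma~\ref{lemma43} and Lemma~\ref{lemma41}. Counting points gives $4\,\te{card}(I)+3(2^{\ell(n)-1}-\te{card}(I))=3\cdot 2^{\ell(n)-1}+\te{card}(I)=n$, as needed.

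The decisive step is the gluing: one must check that no generator's Voronoi region reaches across a gap into a neighbouring basic interval. Since $P$ assigns no mass to the gaps, once this is established the global Voronoi region of each generator meets the support in exactly its local Voronoi region, and the local centroid conditions (Remark~\ref{rem1}, Note~\ref{note1}) upgrade to global ones, making $\ga_n(I)$ a CVT. Let $J_\gs, J_{\gs'}$ be spatially consecutive with $\gs,\gs'\in\set{1,2}^{\ell(n)-1}$, let $\gt$ be their longest common prefix, so that $\gs=\gt 1 2^{j}$ and $\gs'=\gt 2 1^{j}$ for some $j\ge 0$ (where $2^{j}$ and $1^{j}$ denote strings of $j$ repeated symbols) and the separating gap is $S_\gt\big((S_1(1),S_2(0))\big)=S_\gt\big((\tfrac 49,\tfrac 59)\big)$. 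Writing $p$ for the rightmost generator on $J_\gs$ and $q$ for the leftmost generator on $J_{\gs'}$, the needed condition is
\[
S_\gt\!\big(\tfrac 49\big)\le \tfrac 12(p+q)\le S_\gt\!\big(\tfrac 59\big),
\]
and by Lemma~\ref{lemma41} and Remark~\ref{rem2} this is scale invariant, so it suffices to treat $\gt=\es$.

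Here the hard part is reduced to a short finite check. The right-facing generator of $J_\gs$ equals $S_{\gs 22}(\tfrac 12)$ when the piece is four-mean or $A_\gs$ of the first type, and equals $a(\gs 2112,\gs 212,\gs 22)$ when it is $A_\gs$ of the second type; dually the left-facing generator of $J_{\gs'}$ equals $S_{\gs'11}(\tfrac 12)$ for four-mean or the second type, and $a(\gs'11,\gs'121,\gs'1221)$ for the first type. Thus only four sibling configurations arise. Two of them are symmetric about $\tfrac 12$ and give bisector exactly $\tfrac 12$; the other two give $\tfrac 12\big(S_{122}(\tfrac 12)+a(211,2121,21221)\big)=0.521$, which is precisely the value computed in Lemma~\ref{lemma45}, and its mirror image $0.479=\tfrac 12\big(a(12112,1212,122)+S_{211}(\tfrac 12)\big)$. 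Both lie in $(\tfrac 49,\tfrac 59)=(0.444\ldots,0.555\ldots)$. For cousins ($j\ge 1$) the two extreme generators sit nearer the endpoints of the gap, so the bisectors are pushed toward $\tfrac 12$ and the inequalities persist; a direct computation at $j=1$ confirms that $j=0$ is extremal. Hence $\ga_n(I)$ is a CVT with $n$ means.

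Finally, for the count, $I$ ranges over subsets of $\set{1,2}^{\ell(n)-1}$ of size $n-3\cdot 2^{\ell(n)-1}$, giving ${}^{2^{\ell(n)-1}}C_{n-3\cdot 2^{\ell(n)-1}}$ choices; on each $\gs\in I$ the four-mean piece is unique by Proposition~\ref{prop1}, whereas each of the $2^{\ell(n)-1}-\te{card}(I)=2^{\ell(n)+1}-n$ intervals $\gs\notin I$ contributes the two choices of $A_\gs$, a factor $2^{2^{\ell(n)+1}-n}$. The product is the stated number, and the one genuine obstacle throughout is the boundary verification of the previous paragraph.
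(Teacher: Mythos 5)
Your proposal is correct and takes essentially the same approach as the paper: decompose $\ga_n(I)$ into local pieces---the three-mean CVTs $A_\gs$ of $J_\gs$ for $\gs\notin I$ (via Lemmas~\ref{lemma43} and \ref{lemma41}) and the four-mean CVTs $\set{S_{\gs\gt}(\frac12):\gt\in\set{1,2}^2}$ for $\gs\in I$ (via Proposition~\ref{prop1})---then glue them by checking that the bisector of the extreme generators of consecutive intervals lies in the separating gap, and count the choices of $I$ and of the two versions of each $A_\gs$. If anything, your gluing step (the reduction to $\gt=\es$, the enumeration of the four sibling configurations, and the cousin case $j\geq 1$) is more thorough than the paper's proof, which disposes of this verification with the phrase ``proceeding in the similar way as Lemma~\ref{lemma46}.''
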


\begin{proof} Let $n=3\cdot 2^{\ell(n)-1}+k$, where $1\leq k\leq 2^{\ell(n)-1}-1$. Let $I \ci \set{1, 2}^{\ell(n)-1}$ with card$(I)=n-3 \cdot 2^{\ell(n)-1}$. For each $\gs \in \set{1, 2}^{\ell(n)-1}\setminus I$ we have $S_\gs^{-1}(A_\gs)=A$, which is a CVT of $C$ with three-means, and so the set $A_\gs$ forms a CVT of $J_\gs$ for each $\gs \in \set{1, 2}^{\ell(n)-1}\setminus I$. Again, the set $\set{J_{\gs\gt} : \gt \in \set{ 1, 2}^2}$ forms a CVT with four-means of $J_\gs$ for each $\gs \in I$. Thus, proceeding in the similar way as Lemma~\ref{lemma46}, we can prove that the set $\ga_n(I)=(\uu_{\gs \in \set{1, 2}^{\ell(n)-1}\setminus I} A_\gs)\uu \Big\{S_{\gs \gt}\Big(\frac 1 2\Big ):   \gs \in I \te{ and } \gt \in \set{ 1, 2}^2\Big\}$ forms a CVT of $C$ with $n$-means. Notice  that $I$ can be chosen in $  {}^{2^{\ell(n)-1}}C_{n-3 \cdot 2^{\ell(n)-1}}$ ways and $\te{card}(\set{1, 2}^{\ell(n)-1}\setminus I)=2^{\ell(n)-1}-(n-3 \cdot 2^{\ell(n)-1})=2^{\ell(n)+1}-n$. For each $\gs \in \set{1, 2}^{\ell(n)-1}\setminus I$ there are two different choices for $A_\gs$. Hence, the number of CVTs in this case is $2^{2^{\ell(n)+1}-n}\times {}^{2^{\ell(n)-1}}C_{n-3 \cdot 2^{\ell(n)-1}}$. Thus, the proposition is yielded.
\end{proof}

Let us now prove the following lemma.
\begin{lemma} \label{lemma456}
Let $P=\frac 1 2 P\circ S_1^{-1}+\frac 12 P\circ S_2^{-1}$ be the probability measure supported by the Cantor set generated by $S_1(x)=rx$ and $S_2(x)=rx+(1-r)$. Let $n\in \D N$, $n\geq 2$ and $n$ is not of the form $2^{\ell(n)}$ for any $\ell(n)\in \D N$. Then, $\ga_n(I)$, given by Proposition~\ref{prop2} or Proposition~\ref{prop3}, for each $n\geq 2$ forms a CVT if $0.4364590141\leq r\leq 0.4512271429$ (written up to ten decimal places).
\end{lemma}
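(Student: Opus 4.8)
The plan is to reduce the CVT property of $\ga_n(I)$ to a finite list of inequalities in the single parameter $r$, and then to identify which of them are binding. The starting observation is that, by the very construction in Proposition~\ref{prop2} and Proposition~\ref{prop3}, every generator of $\ga_n(I)$ is a conditional expectation $a(\cdots)$, hence by Remark~\ref{rem1} and Note~\ref{note1} it is \emph{automatically} the centroid of the union of basic intervals it names. Consequently $\ga_n(I)$ is a CVT precisely when these prescribed unions are the actual Voronoi regions, i.e. when every Voronoi boundary between two consecutive generators falls inside a gap of the Cantor set separating the corresponding intervals. So the whole problem is to control the location of these boundaries, of which there are two kinds.

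First I would treat the boundaries internal to a single block. Since each block is the image under some $S_\gs$ of one of three basic patterns (the two-mean midpoint pattern, the three-mean pattern $A$, or the four-mean pattern), Lemma~\ref{lemma41} shows these boundaries are exact scaled copies of the boundaries of the pattern on $J$ itself. The two- and four-mean patterns are valid for every $r$ by Proposition~\ref{prop1}, so the only internal conditions are the two inequalities appearing in the proof of Lemma~\ref{lemma43} for the three-mean pattern $A$. Using that each $a(\cdots)$ is the $p_\gs$-weighted average of the explicit points $S_\gs(\frac12)$ (Note~\ref{note1}) and that each endpoint $S_\gs(0),S_\gs(1)$ is an explicit composition of the affine maps $S_1(x)=rx$, $S_2(x)=rx+1-r$, I would rewrite these two inequalities as honest polynomial inequalities in $r$; for instance the upper half of the first becomes $69r^4+14r^2-76r+28\leq 0$ and the lower half of the second becomes $r^4+21r^2-14r+2\leq 0$.

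Next I would treat the boundaries between adjacent blocks. Here the key simplification is that two consecutive basic intervals of a common level always have the form $J_{\go 12^{\,j}}$ and $J_{\go 21^{\,j}}$, and pulling back by the affine increasing map $S_\go^{-1}$ one finds that the separating gap is \emph{always} $(r,1-r)$, because $S_{12^{\,j}}(1)=r$ and $S_{21^{\,j}}(0)=1-r$ independently of $j$. The two generators meeting at this boundary pull back to explicit functions of $r$ and $j$, and the displacement of their midpoint from $\frac12$ turns out to be $r^{\,j+2}$ times a fixed polynomial in $r$; this is monotone in $j$ and tends to $0$, so for each pairing of block types and orientations the binding case is $j=0$, leaving only finitely many gluing inequalities. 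A pleasant feature I would exploit is that whenever the two blocks meet in mutually reflected orientations the midpoint equals $\frac12$ exactly, so those gluings are automatic; the remaining gluing inequalities I would check to be comfortably slack throughout the target interval.

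Finally, intersecting the finitely many resulting inequalities, the two that actually cut the interval are exactly the two singled out above: $69r^4+14r^2-76r+28\leq 0$ forces $r\geq 0.4364590141$, and $r^4+21r^2-14r+2\leq 0$ forces $r\leq 0.4512271429$. I expect the main obstacle to be organizational rather than conceptual: one must enumerate all adjacency types together with the two possible orientations of every three-mean block, carry out the pull-back, and verify rigorously both the monotonicity-in-$j$ claim (which collapses infinitely many conditions to the case $j=0$) and that none of the surviving gluing inequalities is tighter than the two internal ones. Carefully confirming that these two polynomial constraints are the genuinely binding ones, and that their roots are the stated ten-digit endpoints, is the delicate part.
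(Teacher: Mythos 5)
Your proposal is correct and, at its core, takes the same route as the paper's own proof: the binding constraints are exactly the two Voronoi-boundary conditions for the three-mean pattern $A$ from Lemma~\ref{lemma43}, transported to every block of $\ga_n(I)$ by Lemma~\ref{lemma41}, and your two polynomials are the right ones --- $69r^4+14r^2-76r+28\leq 0$ and $r^4+21r^2-14r+2\leq 0$ have roots at $0.4364590141$ and $0.4512271429$, in agreement with the intervals $[0.4364590141,\,0.4521904271]$ and $[0.2076973455,\,0.4512271429]$ that the paper records for the first and second conditions. Where you differ is in completeness, and the difference is in your favor. The paper's proof verifies only the $n=3$ inequalities and then disposes of every remaining Voronoi boundary with a one-line appeal to Lemma~\ref{lemma41}; but that lemma only transports boundaries \emph{interior} to a single scaled copy of a pattern, while the gluing boundaries between adjacent blocks (which for $r=\frac 49$ were checked numerically in Lemma~\ref{lemma44}, Lemma~\ref{lemma45} and Lemma~\ref{lemma46}) are never verified for general $r$ in the stated range. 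Your pull-back analysis supplies precisely this missing step: consecutive intervals of a common level have the form $J_{\go 12^{j}}$ and $J_{\go 21^{j}}$, the separating gap pulls back under $S_\go^{-1}$ to $(r,1-r)$ independently of $j$, the midpoint of the two facing generators sits at $\frac 12$ plus a term carrying a factor $r^{j+1}$ (indeed $r^{j+2}$, as you say, since each facing generator is within a factor $r$ of its endpoint), so the case $j=0$ is binding and reflected orientations give exactly $\frac 12$; the finitely many surviving gluing inequalities are then comfortably slack on $[0.4364590141,\,0.4512271429]$ (for instance, the Lemma~\ref{lemma44}-type condition $r\leq \frac 12+\frac{r^2(1-r)}{4}\leq 1-r$ only fails for $r$ beyond roughly $0.47$). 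So your plan reproduces the paper's computation and, in addition, closes a gap that the paper's own proof leaves implicit.
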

\begin{proof} Let $\ga_3(I)=\set{a(11, 121, 1221), a(1222, 21), a(22)}$. It forms a CVT if
\begin{align*} S_{1221}(1)&\leq \frac 12(a(11, 121, 1221)+ a(1222, 21))\leq S_{1222}(0)  \\
\te { and } & S_{21}(1)\leq \frac 12 (a(1222, 21)+a(22))\leq S_{22}(0),
 \end{align*} i.e., if $0.4364590141\leq r\leq 0.4521904271$ and $0.2076973455\leq r\leq 0.4512271429$, which yields  $0.4364590141\leq r\leq 0.4512271429$. Similarly, if
 $\ga_3(I)=\set{a(11),  a(12, 2111), a(2112, 212, 22)}$, it will form a CVT if $0.4364590141\leq r\leq 0.4512271429$. By Lemma~\ref{lemma41}, we can say that the set $\ga_n(I)$ for each $n\geq 2$ also forms a CVT if $0.4364590141\leq r\leq 0.4512271429$, and thus the lemma is yielded.

\end{proof}

\begin{remark} Proposition~\ref{prop1}, Proposition~\ref{prop2} and Proposition~\ref{prop3} give the CVTs with $n$-means for the probability distribution $P$ supported by the Cantor set generated by the mappings $S_1(x)=\frac 4 9 x$ and $S_2(x)=\frac 4 9 x+\frac 59$ for any positive integer $n\geq 2$. Lemma~\ref{lemma456} says that using the formula given in this paper, if $n$ is not of the form $2^{\ell(n)}$ for any $\ell(n)\in \D N$, one can determine the CVTs with $n$-means for any $n\geq 2$, and hence the corresponding distortion error, for the probability measure $P$ supported by any Cantor set generated by $S_1(x)=rx$ and $S_2(x)=rx+(1-r)$, where $0.4364590141\leq r\leq 0.4512271429$.
\end{remark}

\section{Distortion errors for two different CVTs}

In this section we compare the distortion errors for two different CVTs with $n$-means: one is obtained using the formula given by Proposition~\ref{prop1}, Proposition~\ref{prop2}, or Proposition~\ref{prop3} in this paper, and one is obtained using the formula given in \cite{GL2}. Let $P=\frac 1 2 P\circ S_1^{-1}+\frac 12 P\circ S_2^{-1}$ be the probability measure supported by the Cantor set generated by $S_1(x)=rx$ and $S_2(x)=rx+(1-r)$. Then, it can be shown that if $V$ is the variance of a random variable with distribution $P$ in this case, then
$V=\frac{1-r}{4 (r+1)}$.

\begin{defi}  \label{def1} For $n\in \D N$ with $n\geq 2$ let $\ell(n)$ be the unique natural number with $2^{\ell(n)} \leq n<2^{\ell(n)+1}$. For $I\sci \set{1, 2}^{\ell(n)}$ with card$(I)=n-2^{\ell(n)}$ let $\gb_n(I)$ be the set consisting of all midpoints $a_\gs$ of intervals $J_\gs$ with $\gs \in \set{1,2}^{\ell(n)} \setminus I$ and all midpoints $a_{\gs 1}$, $a_{\gs 2}$ of the basic intervals of $J_\gs$ with $\gs \in I$. Formally,
\[\gb_n(I)=\set{a_\gs : \gs \in \set{1,2}^{\ell(n)} \setminus I} \uu \set{a_{\gs 1} : \gs \in I} \uu \set {a_{\gs 2} : \gs \in I}.\]
\end{defi}
In \cite{GL2}, it was shown that $\gb_n(I)$ forms an optimal set of $n$-means for $r=\frac 13$.
Let us now prove the following lemma.

\begin{lemma} \label{lemma47} Let $\gb_n(I)$ be the set given by Definition~\ref{def1}. Then, $\gb_n(I)$ forms a CVT with $n$-means for each $n\geq 2$ if $0<r\leq \frac {5-\sqrt {17}}{2}$, i.e., if $0<r\leq 0.4384471872$ (written up to ten decimal places).
\end{lemma}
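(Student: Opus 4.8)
The plan is to split the statement into a centroid condition and a family of ``gap conditions,'' to show the centroid condition is automatic, and then to collapse the gap conditions, via self-similarity, to a single extremal inequality which turns out to be exactly $r^2-5r+2\geq 0$.

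First I would record that the centroid condition is free. Exactly as in Note~\ref{note1}, for every $r$ the measure $P$ is symmetric about $\frac12$, so $E(X)=\frac12$ and $a(\gs)=S_\gs(E(X))=S_\gs(\frac12)$; since $S_\gs(\frac12)=\frac12(S_\gs(0)+S_\gs(1))$, this common value is simultaneously the midpoint $a_\gs$ of $J_\gs$ and the conditional expectation $E(X\mid X\in J_\gs)$. Hence each point of $\gb_n(I)$ is the centroid of the basic interval it labels. Consequently, once we show that the Voronoi region of each generator is $P$-a.s.\ exactly the basic interval it labels ($J_\gs$ for $\gs\notin I$, and $J_{\gs1},J_{\gs2}$ for $\gs\in I$), the generators coincide with the centroids of their Voronoi regions and $\gb_n(I)$ is by definition a CVT. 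So the whole problem reduces to a statement about Voronoi regions.

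Next I would set up the gap conditions. Listing the generators of $\gb_n(I)$ in increasing order, the Voronoi boundaries are the midpoints of consecutive pairs, so it suffices that every such midpoint lie in the gap of $C$ separating the two labelled intervals. Every gap has the form $(S_\go(r),S_\go(1-r))$ for some $\go\in\set{1,2}^m$; call $m$ its level. If $g_L$ is the largest generator in $J_{\go1}$ and $g_R$ the smallest in $J_{\go2}$, the condition to check is $\frac12(g_L+g_R)\in[S_\go(r),S_\go(1-r)]$. By Lemma~\ref{lemma41} and Remark~\ref{rem2}, applying $S_\go^{-1}$ reduces this to the top-level gap, i.e.\ to $\frac12\big(S_\go^{-1}(g_L)+S_\go^{-1}(g_R)\big)\in[r,1-r]$, where $S_\go^{-1}(g_L)$ is now the largest generator-midpoint in $J_1$ and $S_\go^{-1}(g_R)$ the smallest in $J_2$ for the rescaled configuration. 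Because $\gb_n(I)$ uses only intervals of levels $\ell(n)$ and $\ell(n)+1$, the rightmost such interval in $J_1$ lies on the all-$2$'s branch and is $J_{12^{\,\ell(n)-m-1}}$ (if unsplit) or its right child $J_{12^{\,\ell(n)-m}}$ (if split); using $S_{12^{k}}(\frac12)=r-\frac12 r^{\,k+1}$ this gives $S_\go^{-1}(g_L)=r-\frac12 r^{\,j_L}$ with $j_L\in\set{\ell(n)-m,\ \ell(n)-m+1}$, and symmetrically $S_\go^{-1}(g_R)=1-r+\frac12 r^{\,j_R}$ with $j_R\in\set{\ell(n)-m,\ \ell(n)-m+1}$. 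Their midpoint is $\frac12+\frac14\big(r^{\,j_R}-r^{\,j_L}\big)$, so the gap condition becomes exactly $|r^{\,j_R}-r^{\,j_L}|\leq 2-4r$.

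Finally I would locate the extremal gap. The within-split gaps ($m=\ell(n)$) give $j_L=j_R$ and midpoint $\frac12$, hence impose no constraint; for the cross-gaps one has $\ell(n)-m\geq 1$, and $|r^{\,j_R}-r^{\,j_L}|$ is largest when $\set{j_L,j_R}=\set{\ell(n)-m,\ell(n)-m+1}$ with $\ell(n)-m$ as small as possible, namely $1$, which occurs when one of two neighbouring level-$\ell(n)$ intervals is split and the other is not. That worst value equals $r-r^2$, so all gap conditions hold at once precisely when $r-r^2\leq 2-4r$, i.e.\ $r^2-5r+2\geq 0$, i.e.\ $r\leq\frac{5-\sqrt{17}}2$. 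I expect the main obstacle to be the bookkeeping in this last step: one must verify that the four combinations of split/unsplit on the two sides of a gap, over all levels $m$, are genuinely captured by the reduced pairs $j_L,j_R\in\set{\ell(n)-m,\ell(n)-m+1}$, and that deeper gaps, for which $r^{\,\ell(n)-m}(1-r)$ is smaller, are automatically weaker. Once that is settled the extremal inequality is immediate, and the bound $\frac{5-\sqrt{17}}2$ is seen to be sharp for this family.
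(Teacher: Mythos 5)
Your proof is correct and follows essentially the same route as the paper: both reduce the CVT property to the requirement that each Voronoi boundary (the midpoint of two consecutive generators) lie in the Cantor-set gap separating the corresponding labelled intervals, and both use the self-similarity of Lemma~\ref{lemma41} to collapse all of these conditions to the single extremal inequality $r-r^2\leq 2-4r$, i.e., $r\leq\frac{5-\sqrt{17}}{2}$. The only difference is completeness: the paper verifies just the $n=3$ configuration $\set{a_{11},a_{12},a_{2}}$ and asserts the general case ``by Lemma~\ref{lemma41},'' whereas you carry out explicitly the bookkeeping showing that every gap condition, for every $n$ and every $I$, is dominated by precisely that split/unsplit adjacent configuration.
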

\begin{proof} Let $a_{11}$ be the midpoint of $J_{11}$, $a_{12}$ be the midpoint of $J_{12}$, and $a_2$ be the midpoint of $J_2$. Then, $\gb_3(\set{1})=\set{a_{11}, a_{12}, a_2}$, and it will form  a CVT if
$S_{12}(1)\leq \frac 12 (a_{12}+a_2)\leq S_{2}(0)$, which implies $r\leq \frac{1}{2} \left(-r^2+r+2\right)\leq 1-r$, which after simplification yields $0<r\leq \frac{5-\sqrt{17}}{2}$, i.e., $0<r\leq 0.4384471872$. Thus, by Lemma~\ref{lemma41}, it can be seen that if $0<r\leq 0.4384471872$, $\gb_n(I)$ for each $n\geq 2$ also forms a CVT, and thus the lemma is yielded.
\end{proof}

Let us now prove the following proposition.

\begin{prop} \label{prop4} Let $\ga_n(I)$ be the set as defined by Proposition~\ref{prop1}, Proposition~\ref{prop2}, or Proposition~\ref{prop3}, and $\gb_n(I)$ be the set given by Definition~\ref{def1}. Suppose $n$ is not of the form $2^{\ell(n)}$ for any positive integer $\ell(n)$. Then,
$V(P, \ga_n(I))<V(P, \gb_n(I))$ if $0.4371985206<r\leq 0.4384471872$, where $V(P, \ga_n(I))$ and $V(P, \gb_n(I))$ respectively denote the distortion errors for the sets $\ga_n(I)$ and $\gb_n(I)$.
\end{prop}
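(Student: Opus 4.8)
The plan is to collapse the whole family of comparisons to a single inequality between the two kinds of three-means blocks, and then to settle that inequality by an explicit one-variable computation. The starting point is the scaling principle behind both constructions: if $\gt$ is a word and a finite set $T\ci\D R$ quantizes $J_\gt$ through $S_\gt(T)$, then by Lemma~\ref{lemma1} together with the identity $\int_{J_\gt}g\,dP=p_\gt\int g\circ S_\gt\,dP$ already used in Note~\ref{note1},
\[\int_{J_\gt}\min_{b\in T}(x-S_\gt(b))^2\,dP(x)=p_\gt s_\gt^2\,W(T),\qquad W(T):=\int\min_{b\in T}(y-b)^2\,dP(y).\]
Thus a block $J_\gt$ contributes $p_\gt s_\gt^2$ times the whole-space distortion $W(T)$ of the structure placed on it. Write $W_2=r^2V$ and $W_4=r^4V$ for the two- and four-means blocks (as in Proposition~\ref{prop1}, with $r$ replacing $\tfrac49$), $W_B$ for the three-means set $\gb_3(\set{1})$, and $W_A$ for the three-means set $A$.

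\textbf{Matching the block structures.} Since $p_\gs=2^{-|\gs|}$ are equal weights, Lemma~\ref{lemma41} shows that both $V(P,\ga_n(I))$ and $V(P,\gb_n(I))$ are independent of $I$, so it suffices to compare the two canonical values. By Propositions~\ref{prop2} and~\ref{prop3}, $\ga_n$ assigns to each $\gt\in\set{1,2}^{\ell(n)-1}$ a block of $2$, $3$, or $4$ means, every three-means block being an $A$-structure. Grouping the points of $\gb_n$ by their length-$(\ell(n)-1)$ ancestor shows that $\gb_n$ likewise assigns $2$, $3$, or $4$ means to each such $\gt$, according as neither, one, or both children of $J_\gt$ are refined; here each three-means block is a $\gb_3$-structure, while the two- and four-means blocks are identical to those of $\ga_n$. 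A counting check (in Proposition~\ref{prop2}'s range there are $n-2^{\ell(n)}$ three-means and no four-means blocks; in Proposition~\ref{prop3}'s range there are $2^{\ell(n)+1}-n$ three-means and $n-3\cdot2^{\ell(n)-1}$ four-means blocks) confirms the two block-size distributions may be taken to agree. Hence, with $N_3\geq1$ the number of three-means blocks,
\[V(P,\gb_n)-V(P,\ga_n)=N_3\,2^{-(\ell(n)-1)}r^{2(\ell(n)-1)}\,(W_B-W_A),\]
the two- and four-means contributions cancelling. The proposition is therefore equivalent to $W_A<W_B$ on the stated interval.

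\textbf{The base inequality.} Directly from $J_{11},J_{12},J_2$ one gets $W_B=\tfrac12r^2V(1+r^2)$. For $W_A$ I would use the Voronoi regions $J_{11}\uu J_{121}\uu J_{1221}$, $J_{1222}\uu J_{21}$, and $J_{22}$: each generator is the conditional mean (Note~\ref{note1}), so a region's cost equals $V\sum p_\gs s_\gs^2$ over its constituent intervals plus the probability-weighted variance of the centers $S_\gs(\tfrac12)=\tfrac12 r^{|\gs|}+(1-r)\sum_{i:\gs_i=2}r^{\,i-1}$, by Corollary~\ref{cor1}. Substituting $V=\frac{1-r}{4(r+1)}$ turns $W_B-W_A$ into a single rational function of $r$, whose relevant root is $r_0=0.4371985206\ldots$ with $W_B-W_A>0$ exactly for $r>r_0$. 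Together with the ceiling $r\leq\frac{5-\sqrt{17}}2=0.4384471872$ — the range in which $\gb_n$ stays a CVT by Lemma~\ref{lemma47} and $\ga_n$ stays a CVT by Lemma~\ref{lemma456} — this yields $V(P,\ga_n(I))<V(P,\gb_n(I))$ for $0.4371985206<r\leq0.4384471872$.

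\textbf{Main obstacle.} The conceptual work is done once the comparison collapses to $W_A<W_B$; the only laborious step is evaluating $W_A$, which demands the three conditional means, the three between-interval variance terms, and a careful simplification of the resulting rational expression before the sign of $W_B-W_A$ and the threshold $r_0$ can be read off. Everything past the reduction is bookkeeping on a one-variable rational inequality, and the self-similar scaling together with the block-by-block cancellation is what keeps that reduction clean for all $n$ simultaneously.
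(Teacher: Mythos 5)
Your proposal is correct and follows essentially the same route as the paper: reduce to the $n=3$ case by self-similar scaling (your block-matching/cancellation argument is a more fully justified version of the paper's one-line appeal to Lemma~\ref{lemma41}), then compare the two three-means distortions as explicit rational functions of $r$ — your $W_B=\frac12 r^2V(1+r^2)$ matches the paper's $\frac12 r^4V+\frac12 r^2V$, and the paper carries out the $W_A$ computation you describe, obtaining its equation \eqref{eq51} and the same threshold $r_0=0.4371985206$. The only step you leave unexecuted is the routine but laborious evaluation of $W_A$, which is exactly the bookkeeping the paper performs.
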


\begin{proof} If $n$ is of the form $2^{\ell(n)}$ for some positive integer $\ell(n)$, then it is easy to see that $V(P, \ga_n(I))=V(P, \gb_n(I))$. Let us assume that $n$ is not of the form $2^{\ell(n)}$ for any positive integer $\ell(n)$. To prove $V(P, \ga_n(I))<V(P, \gb_n(I))$, due to Lemma~\ref{lemma41},
it is enough to prove the inequality for $n=3$, then it will be satisfied for all other values $n=5, 6, 7, 9, 10$, etc., which are not of the form $2^{\ell(n)}$. Notice  that in $\ga_3(I)$ as defined in Proposition~\ref{prop2}, the set $I$ is an empty set. By Lemma~\ref{lemma43}, take $\ga_3(I)=\set{a(11, 121, 1221), a(1222, 21), a(22)}$. Then, using \eqref{eq0} and \eqref{eq234}, we have
\begin{align*}
&V(P, \ga_3(I))\\
&=V(P, \set{a(11, 121, 1221), a(1222, 21), a(22)})\\
&=\int_{J_{11}\uu J_{121}\uu J_{1221}} (x-a(11, 121, 1221))^2 dP(x)  +\int_{J_{1222} \uu J_{21}} (x-a(1222, 21))^2 dP(x)\\
&\qquad \qquad \qquad+ \int_{J_{ 22}} (x-a( 22))^2 dP(x)\\
&=\int_{J_{11}} (x-a(11, 121, 1221))^2 dP(x)+\int_{J_{121}} (x-a(11, 121, 1221))^2 dP(x)\\
&\qquad \qquad \qquad+\int_{J_{1221}} (x-a(11, 121, 1221))^2 dP(x)+\int_{J_{1222}} (x-a(1222, 21))^2 dP(x)\\
&\qquad \qquad \qquad+\int_{J_{21}} (x-a(1222, 21))^2 dP(x)+\int_{J_{ 22}} (x-a( 22))^2 dP(x),
\end{align*} which implies
\begin{align*}
V(P, \ga_3(I))&=\frac 1 {2^2}\left(r^4 V+(a(11)-a(11, 121, 1221))^2\right)+\frac{1}{2^3}\left(r^6 V +(a(121)-a(11, 121, 1221))^2\right)\\
&+\frac{1}{2^4}\left(r^8 V +(a(1221)-a(11, 121, 1221))^2\right)+\frac{1}{2^4}\left(r^8 V +(a(1222)-a(1222, 21))^2\right)\\
&+\frac{1}{2^2}\left(r^4 V +(a(21)-a(1222, 21))^2\right)+\frac{1}{2^2} r^4 V.
\end{align*}
Now, use \eqref{eq45}, and simplify to obtain
\begin{equation} \label{eq51} V(P, \ga_3(I))=\frac{-3 r^9-3 r^8+14 r^7-22 r^6-71 r^5+49 r^4+4 r^3+88 r^2-84 r+28}{560 (r+1)}.
\end{equation}
To calculate $V(P, \gb_3(I))$ we take $I=\set{1}$, then $\gb_3(I)=\set{a(11), a(12), a(2)}$. Thus,
\begin{align*}
V(P, \gb_3(I))&=\int_{J_{11}}(x-a(11))^2 dP+\int_{J_{12}}(x-a(12))^2 dP+\int_{J_{2}}(x-a(2))^2 dP=\frac 1{2} r^4 V+\frac 1{2} r^2 V.
\end{align*}
We see that $V(P, \ga_3(I))<V(P, \gb_3(I))$ if $0.4371985206<r$. Combining this with the values of $r$ in Lemma~\ref{lemma47}, we see that $V(P, \ga_3(I))<V(P, \gb_3(I))$ if $0.4371985206<r\leq 0.4384471872$, which yields the proposition.
\end{proof}

\begin{remark}
 Proposition~\ref{prop4} says that if $0.4371985206<r\leq 0.4384471872$ and $n$ is not of the form $2^{\ell(n)}$ for any positive integer $\ell(n)$, then the distortion error for the CVT $\ga_n(I)$ obtained in this paper is less than the distortion error for the CVT obtained using the formula in \cite{GL2}. But, until now it is not known whether this $\ga_n(I)$ forms an optimal CVT with $n$-means for $0.4371985206<r\leq 0.4384471872$. In the following section, in  Theorem~\ref{th1}, we give an answer of it.
 \end{remark}

\section{The CVT $\ga_n(I)$ does not form an optimal CVT}

In this section, we show that if $n$ is not of the form $2^{\ell(n)}$ for any positive integer $\ell(n)$, then the CVT $\ga_n(I)$ does not form an optimal CVT for $0.4371985206<r\leq \frac{5-\sqrt{17}}{2}\approx 0.4384471872$.

Write
$
C_\gs:=\set{a(\gs11, \gs1211, \gs12121), a(\gs12122, \gs122, \gs211), a(\gs212, \gs22)}, \te{ or } \\
C_\gs:=\set{a(\gs11, \gs121), a(\gs122, \gs211, \gs21211), a(\gs21212, \gs2122, \gs22)}$. If $\gs$ is the empty word $\es$, then we have $C:=C_\es=\set{a(11, 1211, 12121), a(12122, 122, 211), a(212, 22)}$, or \\$C:=C_\es=\set{a(11, 121), a(122, 211, 21211), a(21212, 2122, 22)}$.
Let $n \in \D N$. If $n=2^{\ell(n)}+k$, where $1\leq k\leq  2^{\ell(n)-1}$, then write
\[\gd_n(I)=\left\{\begin{array}{ll}
\uu_{\gs \in I} C_\gs \uu\Big\{S_{\gs1}(\frac 1 2 ), S_{\gs2}(\frac 1 2) : \gs \in \set{1, 2}^{\ell(n)-1}\setminus I\Big\} \te{ if } 1\leq k< 2^{\ell(n)-1}, & \\
\uu_{\gs \in I} C_\gs  \te{ if } k= 2^{\ell(n)-1}\neq 1, &\\
C_\es \te{ if } k= 2^{\ell(n)-1}=1, \te{ i.e., when } n=3, &
\end{array}\right.\]
where $I \ci \set{1, 2}^{\ell(n)-1}$ with card$(I)=k=n-2^{\ell(n)}$.  If $n=3\cdot 2^{\ell(n)-1}+k$, where $1\leq k\leq  2^{\ell(n)-1}-1$, then write
\[\gd_n(I)=(\uu_{\gs \in \set{1, 2}^{\ell(n)-1}\setminus I} C_\gs)\uu \Big\{S_{\gs \gt}\Big(\frac 1 2\Big ):   \gs \in I \te{ and } \gt \in \set{ 1, 2}^2\Big\},\]
where $I \ci \set{1, 2}^{\ell(n)-1}$ with card$(I)=k=n-3\cdot 2^{\ell(n)-1}$.

We now prove the following proposition.
 \begin{prop} \label{prop15} Let  $0.4364590141\leq r\leq 0.4486234903$. Then, both $\gd_n(I)$ and $\ga_n(I)$ form CVTs.  Moreover, if $n$ is not of the form $2^{\ell(n)}$ for any positive integer $\ell(n)$, then
$V(P, \gd_n(I))<V(P, \ga_n(I))$ if  $0.4364590141\leq r\leq 0.4486234903$, where $V(P, \gd_n(I))$ and $V(P, \ga_n(I))$ respectively denote the distortion errors for the CVTs $\gd_n(I)$ and $\ga_n(I)$.
\end{prop}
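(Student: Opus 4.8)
The plan is to mirror the two-step strategy already used for $\ga_n(I)$ in Lemma~\ref{lemma456} and Proposition~\ref{prop4}: first establish that both families are CVTs on the stated range, and then compare their distortion errors. That $\ga_n(I)$ forms a CVT for $0.4364590141\le r\le 0.4512271429$ is exactly Lemma~\ref{lemma456}, and this interval contains the target range $0.4364590141\le r\le 0.4486234903$, so nothing new is required for $\ga_n(I)$. For $\gd_n(I)$ I would reduce to the case $n=3$ via Lemma~\ref{lemma41}: since every block of $\gd_n(I)$ is a similarity image of the three-means set $C_\es$ (or of a two- or four-means set already handled in Proposition~\ref{prop2} and Proposition~\ref{prop3}), and similarity mappings preserve ratios of distances, it suffices to verify that $C_\es=\set{a(11, 1211, 12121), a(12122, 122, 211), a(212, 22)}$ is a CVT of $C$. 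This amounts to checking the two Voronoi-boundary conditions
\[
S_{12121}(1)\le \tfrac 12\big(a(11,1211,12121)+a(12122,122,211)\big)\le S_{12122}(0)
\]
and
\[
S_{211}(1)\le \tfrac 12\big(a(12122,122,211)+a(212,22)\big)\le S_{212}(0),
\]
each of which, after substituting the conditional expectations from \eqref{eq45} and applying \eqref{eq234}, becomes a pair of polynomial inequalities in $r$. Solving and intersecting these yields the admissible $r$-range; I expect the upper endpoint $0.4486234903$ to come precisely from one of the boundary conditions for $C_\es$, the lower endpoint being the one inherited from the $\ga_n(I)$ condition.

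For the distortion comparison I would again invoke Lemma~\ref{lemma41} together with Corollary~\ref{cor1} to reduce to $n=3$. The key observation is that $\gd_n(I)$ and $\ga_n(I)$ are assembled from the identical two- and four-means blocks and differ only in the three-means blocks, where $\gd_n(I)$ uses $C_\gs$ in place of the $A_\gs$ used by $\ga_n(I)$. Since every such three-means block sits inside a common-level $J_\gs$ as a similarity copy, its contribution to the total distortion is $p_\gs s_\gs^2$ times the corresponding local three-means distortion by Corollary~\ref{cor1}; hence the sign of $V(P,\gd_n(I))-V(P,\ga_n(I))$ equals the sign of $V(P,C_\es)-V(P,A_\es)$, i.e. the $n=3$ case. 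I would then compute $V(P,\gd_3(I))=V(P,C_\es)$ explicitly by splitting each of the three integrals over its constituent intervals, applying \eqref{eq234} piecewise, and substituting the conditional expectations from \eqref{eq45}; this produces a rational function of $r$ directly analogous to the expression \eqref{eq51} already obtained for $V(P,\ga_3(I))$.

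The comparison finally reduces to showing that the numerator of $V(P,\ga_3(I))-V(P,\gd_3(I))$ (both share the common denominator coming from $V=\frac{1-r}{4(r+1)}$ and the dyadic interval weights) is positive on the stated range, a single polynomial sign analysis in $r$. As with the threshold $0.4371985206$ in Proposition~\ref{prop4}, I would isolate the relevant root, and I expect it to fall below the lower endpoint of the CVT range, so that the strict inequality holds throughout and the binding constraint at the right end is CVT existence for $\gd_n(I)$. The main obstacle is the heavier bookkeeping for $C_\es$: unlike $A_\es$, its first block $a(11,1211,12121)$ groups intervals at levels $2,4$ and $5$ of the construction (and similarly for the remaining blocks), so the conditional expectations and the integrals involve higher powers of $r$ and substantially more terms, raising the degree of the resulting polynomial and making the exact endpoint $0.4486234903$ accessible only through careful numerical root isolation rather than a clean closed form.
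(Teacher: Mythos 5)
Your proposal is correct and follows essentially the same route as the paper: establish the CVT range for $\gd_n(I)$ by the two Voronoi-boundary inequalities for $C_\es$ (yielding the upper endpoint $0.4486234903$), intersect with the range from Lemma~\ref{lemma456} for $\ga_n(I)$ (which supplies the lower endpoint), reduce the distortion comparison to $n=3$ via Lemma~\ref{lemma41}, and compare the explicitly computed rational function $V(P,\gd_3(I))$ with \eqref{eq51}. Your expectation that the critical root lies below the CVT range is confirmed by the paper, which finds $V(P,\gd_3(I))<V(P,\ga_3(I))$ already for $r>0.4307442489$, so the strict inequality holds on the whole stated interval.
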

\begin{proof}Let us first find the values of $r$ for which $\gd_n(I)$ forms a CVT.  Proceeding in the similar way as Lemma~\ref{lemma456}, we see that $\gd_n(I)$ forms a CVT if $0.4332840530\leq r\leq 0.4486234903$.  Moreover, by Lemma~\ref{lemma456}, $\ga_n(I)$ forms a CVT if $0.4364590141\leq r\leq 0.4512271429$. Thus,  both $\gd_n(I)$ and $\ga_n(I)$ forms a CVT if  $0.4364590141\leq r \leq 0.4486234903$. Now, to find the values of $r$ for which $V(P, \gd_n(I))<V(P, \ga_n(I))$, we proceed in the similar way as the proof of Proposition~\ref{prop4}. Take $\gd_3(I)=\set{a(11, 1211, 12121), a(12122, 122, 211), a(212, 22)}$. Then, using \eqref{eq0} and \eqref{eq234}, we have
\begin{align*}
&V(P, \gd_3(I))\\
&=\frac 1 {2^2}\left(r^4 V+(a(11)-a(11, 1211, 12121))^2\right)+\frac 1 {2^4}\left(r^8 V+(a(1211)-a(11, 1211, 12121))^2\right)\\
&+\frac 1 {2^5}\left(r^{10} V+(a(12121)-a(11, 1211, 12121))^2\right)+\frac 1 {2^5}\left(r^{10} V+(a(12122)-a(12122, 122, 211))^2\right)\\
&+\frac 1 {2^3}\left(r^{6} V+(a(122)-a(12122, 122, 211))^2\right)+\frac 1 {2^3}\left(r^{6} V+(a(211)-a(12122, 122, 211))^2\right)\\
&+\frac 1 {2^3}\left(r^{6} V+(a(212)-a(212, 22))^2\right)+\frac 1 {2^2}\left(r^{4} V+(a(22)-a(212, 22))^2\right).
\end{align*}
Then, using \eqref{eq45},
\begin{align*} \label{eq52} & V(P, \gd_3(I))\\
&=-\frac{5 r^{11}+5 r^{10}-2 r^9+18 r^8+89 r^7+21 r^6+180 r^5-48 r^4-140 r^3-568 r^2+660 r-220}{3168 (r+1)}.\end{align*}
Equation \eqref{eq51} gives $V_3(P, \ga_3(I))$. Thus, we see that $V(P, \gd_3(I))<V(P, \ga_3(I))$ if $0.4307442489<r$. Combining this with the values of $r$ for which both $\gd_3(I)$ and $\ga_3(I)$ simultaneously form a CVT, we see that $V(P, \gg_3(I))<V(P, \ga_3(I))$ if  $0.4364590141\leq r\leq0.4486234903$, which yields the proposition.
\end{proof}

Let us now give the following theorem.
\begin{theorem}\label{th1}
Let $n\in \D N$ be such that $n$ is not of the form $2^{\ell(n)}$ for any $\ell(n)\in \D N$. Let $\ga_n(I)$ be the set as defined in Section~3. Then, $\ga_n(I)$ does not form an optimal CVT for $0.4371985206<r\leq 0.4384471872$.
\end{theorem}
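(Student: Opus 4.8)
The plan is to obtain the theorem as a direct consequence of Proposition~\ref{prop15} together with the very definition of an optimal CVT, so that essentially no new computation is required. First I would check that the interval $0.4371985206<r\leq 0.4384471872$ in the hypothesis of the theorem is contained in the interval $0.4364590141\leq r\leq 0.4486234903$ of Proposition~\ref{prop15}; indeed $0.4364590141<0.4371985206$ and $0.4384471872<0.4486234903$. Consequently, for every $r$ meeting the hypothesis of the theorem, Proposition~\ref{prop15} applies verbatim: both $\gd_n(I)$ and $\ga_n(I)$ are CVTs with $n$-means, and, because $n$ is assumed not to be of the form $2^{\ell(n)}$, their distortion errors obey the strict inequality $V(P,\gd_n(I))<V(P,\ga_n(I))$.

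Next I would note that $\gd_n(I)$ is a legitimate competitor of the same cardinality as $\ga_n(I)$. By construction $\gd_n(I)$ follows the same combinatorial recipe as $\ga_n(I)$, with each three-point block $A_\gs$ replaced by the three-point block $C_\gs$; hence the cardinalities agree, card$(\gd_n(I))=$ card$(\ga_n(I))=n$. The crucial point is that the optimality of a set is measured against \emph{all} finite subsets of $\D R$ with at most $n$ points, not merely against other CVTs, so the fact that $\gd_n(I)$ happens to be a CVT is incidental here; what matters is only that $\gd_n(I)\sci \D R$ with card$(\gd_n(I))\leq n$. Therefore, directly from the definition of the $n$th quantization error in \eqref{eq0}, we have $V_n=V_n(P)\leq V(P,\gd_n(I))$.

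Chaining the two estimates then gives $V_n\leq V(P,\gd_n(I))<V(P,\ga_n(I))$, so $V(P,\ga_n(I))>V_n$. Thus the generators $\ga_n(I)$ fail to attain the infimum defining $V_n$, i.e. they do not form an optimal set of $n$-means. Since an optimal CVT is by definition one whose $n$ generators form an optimal set of $n$-means, it follows that $\ga_n(I)$ does not form an optimal CVT for any $r$ in the stated range, which is exactly the claim.

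I do not anticipate a genuine obstacle, since all of the analytic labor---verifying that $\gd_n(I)$ is itself a CVT on the required interval of ratios and that its distortion strictly undercuts that of $\ga_n(I)$---has already been carried out in Proposition~\ref{prop15}. The only point that warrants a word of care is the reduction from general $n$ to the base case $n=3$ underlying Proposition~\ref{prop15}: one must rely on Lemma~\ref{lemma41} and the self-similar structure to transport the strict inequality established at $n=3$ to every $n$ not a power of $2$, so that the competitor $\gd_n(I)$ beats $\ga_n(I)$ uniformly in $n$ and the non-optimality conclusion holds for all such $n$ simultaneously.
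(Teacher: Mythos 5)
Your proposal is correct and follows essentially the same route as the paper: the paper's proof likewise cites Proposition~\ref{prop15} to get that both $\gd_n(I)$ and $\ga_n(I)$ are CVTs with $V(P,\gd_n(I))<V(P,\ga_n(I))$ on the relevant range of $r$, and concludes non-optimality of $\ga_n(I)$. Your write-up merely makes explicit the two small steps the paper leaves implicit---the containment of the interval $0.4371985206<r\leq 0.4384471872$ in $[0.4364590141, 0.4486234903]$, and the chain $V_n\leq V(P,\gd_n(I))<V(P,\ga_n(I))$ showing $\ga_n(I)$ fails the definition of an optimal set of $n$-means.
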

\begin{proof} 
By Proposition~\ref{prop15} we see that for $0.4364590141\leq r\leq 0.4486234903$, both $\gd_n(I)$ and $\ga_n(I)$ form CVTs, and $V(P, \gd_n(I))<V(P, \ga_n(I))$. Thus, $\ga_n(I)$ does not form an optimal CVT for $0.4371985206<r\leq 0.4384471872$, which is the theorem. \end{proof}

\begin{remark} \label{remark00}
Comparing Proposition~\ref{prop4} and Proposition~\ref{prop15}, if $n$ is not of the form $2^{\ell(n)}$ for any positive integer $\ell(n)$, we can say that if  $0.4364590141\leq r\leq \frac{5-\sqrt{17}}{2}\approx 0.4384471872$, then the CVT $\gb_n(I)$, which is obtained using the formula given in \cite{GL2} does not form an optimal CVT. The least upper bound of $r$ for which $\gb_n(I)$ forms an optimal CVT is still unknown. The investigation of it will appear elsewhere.
\end{remark}


\begin{thebibliography}{9999}
\bibitem[AW]{AW} E.F. Abaya and G.L. Wise, \emph{Some remarks on the existence of optimal quantizers}, Statistics \& Probability Letters, Volume 2, Issue 6, December 1984, pp. 349-351.

    \bibitem[DFG]{DFG} Q. Du, V. Faber and M. Gunzburger, \emph{Centroidal Voronoi Tessellations: Applications and Algorithms}, SIAM Review, Vol. 41, No. 4 (1999), pp. 637-676.


\bibitem[DR]{DR} C.P. Dettmann and M.K. Roychowdhury, \emph{Quantization for uniform distributions on equilateral triangles}, Real Analysis Exchange, Vol. 42(1), 2017, pp. 149-166.


\bibitem[GG]{GG} A. Gersho and R.M. Gray, \emph{Vector quantization and signal compression}, Kluwer Academy publishers: Boston, 1992.

\bibitem[GKL]{GKL}  R.M. Gray, J.C. Kieffer and Y. Linde, \emph{Locally optimal block quantizer design}, Information and Control, 45 (1980), pp. 178-198.


\bibitem[GL]{GL} A. Gy\"orgy and T. Linder, \emph{On the structure of optimal entropy-constrained scalar quantizers},  IEEE transactions on information theory, vol. 48, no. 2, February 2002.

\bibitem[GL1]{GL1} S. Graf and H. Luschgy, \emph{Foundations of quantization for probability distributions}, Lecture Notes in Mathematics 1730, Springer, Berlin, 2000.


\bibitem[GL2]{GL2} S. Graf and H. Luschgy, \emph{The Quantization of the Cantor Distribution}, Math. Nachr., 183 (1997), pp. 113-133.

\bibitem[GN]{GN}  R. Gray and D. Neuhoff, \emph{Quantization,} IEEE Trans. Inform. Theory,  44 (1998), pp. 2325-2383.


\bibitem[H]{H} J. Hutchinson, \emph{Fractals and self-similarity}, Indiana Univ. J., 30 (1981), pp. 713-747.




\bibitem[Z]{Z} R. Zam, \emph{Lattice Coding for Signals and Networks: A Structured Coding Approach to Quantization, Modulation, and Multiuser Information Theory}, Cambridge University Press, 2014.

\end{thebibliography}
\end{document}